\newtheorem{theorem}{Theorem}[section]
\newtheorem{lemma}{Lemma}[section]
\newtheorem{problem}{Problem}[section]
\newtheorem{corollary}{Corollary}[section]
\newtheorem{claim}{Claim}
\theoremstyle{definition}
\newtheorem{case}{Case}
\newtheorem{subcase}{Subcase}[case]
\DeclareMathOperator{\Lip}{\mathsf{Lip}}
\DeclareMathOperator{\supp}{supp}
\title{\bf Graphs with positive Lin-Lu-Yau curvature without quadrilaterals}
\author{Huiqiu Lin,\ Zhe You\setcounter{footnote}{-1}\footnote{\emph{Email address:} huiqiulin@126.com(H. Lin),\ y30231280@mail.ecust.edu.cn (Z. You)}\\[2mm]
	\small School of Mathematics, East China University of Science and Technology, \\
	\small  Shanghai 200237, P. R. China
}
\date{}
\begin{document}
	\maketitle
	\begin{abstract}
    The definition of Ricci curvature on graphs was given in Lin-Lu-Yau, Tohoku Math., 2011, which is a variation of Ollivier, J. Funct. Math., 2009. 
    Recently, a powerful limit-free formulation of Lin-Lu-Yau curvature using the graph Laplacian has been given in M\"{u}nch-Wojciechowski, Adv. Math., 2019. 
    Let $F_k$ be the friendship graph obtained from $k$ triangles by sharing a common vertex and $T$ be the graph obtained from a triangle and $K_{1,3}$ by adding a matching between every leaf of $K_{1,3}$ and a vertex of the triangle.
    In this paper, we classify all the simple connected $C_4$-free graphs with positive Lin-Lu-Yau curvature for minimum degree at least 2: the cycles $C_3,C_5$, the friendship graphs $F_2,F_3$, the line graph of Peterson graph, and $T$.

		\par\vspace{2mm}
		
		\noindent{\bfseries Keywords:} Lin-Lu-Yau curvature, $C_4$-free graphs
		\par\vspace{1mm}
		
		\noindent{\bfseries }
	\end{abstract}

\section{Introduction}\label{section::1}
Let $G=(V,E)$ be an undirected simple connected graph and $V,E$ respectively denote the set of vertices and edges of $G$.
Let $d_v$ denote the number of edges which contain $v$.
It is well known that the concept of curvature plays an important role in  Riemannian geometry. 
Over the past two decades, many scholars have studied a notion of curvature in graph theory, which is called the {\it combinatorial  curvature}. 
For each $v\in V(G)$, the combinatorial curvature at $v$, denoted by $\phi(v)$, is defined as
   
\begin{align*}
    \phi(v)=1-\frac{d_v}{2}+\sum\limits_{\sigma\in F(v)}\frac{1}{|\sigma|}.
\end{align*}

In the definition, $|\sigma|$ denotes the number of edges bounding $\sigma$, and $F(v)$ denotes the set of faces which $v$ is incident to. 
A graph $G$ is said to {\it have positive combinatorial curvature (everywhere)} if $\phi(v)>0$ for all $v\in V(G)$. 
Higuchi~\cite{Higuchi} conjectured that if a simple connected graph $G$ with positive combinatorial curvature (everywhere) can be embedded into a $2$-sphere and has minimum degree $\delta(G)\geq 3$, then $G$ is finite. 
Higuchi's conjecture was verified by Sun and Yu~\cite{Sun} for cubic planar graphs, and finally DeVos and Mohar~\cite{Devos} gave the answer as the following theorem.

\begin{theorem}[\cite{Devos}]\label{thm::1.1}
    Let $G$ be a simple connected graph which is $2$-cell embedded into a surface $S$ so that every vertex and face has degree at least $3$. 
    If $G$ has positive combinatorial curvature (everywhere), then  $G$ is finite and $S$ is homeomorphic to either {\color{blue}a} $2$-sphere or the projective plane. 
    Furthermore, if $G$ is not a prism, antiprism, or the projective planar analogue of one of these, then $|V (G)|\leq 3444$.
\end{theorem}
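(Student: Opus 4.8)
The whole argument rests on a combinatorial Gauss--Bonnet (Descartes--Euler) identity, so the first step is to prove it. Summing the local curvature over all vertices and regrouping the three types of terms,
\[
\sum_{v\in V}\phi(v)=\sum_v 1-\frac12\sum_v d_v+\sum_v\sum_{\sigma\in F(v)}\frac1{|\sigma|}=|V|-|E|+|F|=\chi(S),
\]
where $-\frac12\sum_v d_v=-|E|$ by handshaking and $\sum_v\sum_{\sigma\in F(v)}\frac1{|\sigma|}=\sum_\sigma|\sigma|\cdot\frac1{|\sigma|}=|F|$ because a face of degree $|\sigma|$ accounts for exactly $|\sigma|$ vertex--face incidences; the final equality is Euler's formula. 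Assuming for the moment that $G$ is finite, positivity of every $\phi(v)$ forces $\chi(S)>0$, and since the only closed surfaces with positive Euler characteristic are the $2$-sphere ($\chi=2$) and the projective plane ($\chi=1$), this already identifies $S$.

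The point that makes the theorem nontrivial is that a single vertex can carry arbitrarily small positive curvature: a vertex whose incident faces have sizes $(4,4,n)$ or $(3,3,3,n)$ has curvature exactly $\tfrac1n$. Consequently the finiteness of $G$ cannot be read off from a crude lower bound on $\phi$, and I would establish it through the infinite-graph version of the identity above. Applied to a finite vertex patch $U$, Descartes--Euler expresses the total curvature of $U$ as a topological term minus a contribution localized on the boundary of $U$; if $G$ were infinite and positively curved everywhere, letting $U$ exhaust $G$ would force the accumulated positive curvature to outgrow the available boundary and topological budget, a contradiction. This rules out infinite examples and, together with the previous paragraph, leaves only the sphere and the projective plane.

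The explicit bound $|V(G)|\le 3444$ is the genuinely hard part, and it is exactly here that the prism and antiprism families must be isolated. These realize the degenerate configurations above globally: the prism on $2n$ vertices has every vertex of type $(4,4,n)$ and the antiprism of type $(3,3,3,n)$, so $\phi\equiv\tfrac1n$ yet $|V|=2n$ is unbounded; the projective-planar analogues behave in the same way, and they are the only families of such graphs along which the minimum curvature tends to $0$. Excluding them, the task is to show that the budget $\sum_v\phi(v)=\chi(S)\le 2$ cannot be spread over too many vertices. The extremal near-flat local patterns are controlled by an Egyptian-fraction phenomenon, the Sylvester identity $\tfrac12+\tfrac13+\tfrac17+\tfrac1{43}=1-\tfrac1{1806}$ marking the exact boundary between positive and nonpositive curvature, which yields a first finiteness bound of order $1806\cdot\chi(S)$.

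Sharpening this to the stated value is where I expect the real difficulty to lie: one must run a discharging argument that tracks how patches of near-zero curvature are forced to sit around large faces, show that such patches cannot accumulate without reconstructing a prism or antiprism, and then optimize over the finitely many admissible local face types. Carrying this optimization out carefully, while simultaneously excluding the exceptional families, is what produces the precise constant $3444$; the bookkeeping of the interaction between large faces and their surrounding vertices, rather than any single inequality, is the main obstacle.
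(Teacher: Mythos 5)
The paper does not contain a proof of this statement: Theorem \ref{thm::1.1} is quoted verbatim from DeVos and Mohar \cite{Devos} as background, so the only meaningful comparison is with their original argument, whose overall architecture your outline does reproduce correctly --- the discrete Gauss--Bonnet identity $\sum_{v}\phi(v)=\chi(S)$ (your first paragraph is a complete and correct proof of it for finite $G$), the consequent restriction to $\chi(S)>0$ and hence to the sphere or projective plane, the identification of prisms and antiprisms as the families with curvature $1/n$ spread over $2n$ vertices, and a near-flat-vertex analysis governed by Egyptian fractions. As a proof, however, the proposal has two genuine gaps, both of which you acknowledge but neither of which you close.

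First, the finiteness step (Higuchi's conjecture, the actual heart of \cite{Devos}) does not follow from the exhaustion heuristic of your second paragraph: Gauss--Bonnet with boundary for a finite patch $U$ carries a boundary term that can be arbitrarily negative, and an infinite positively curved graph could a priori have \emph{summable} total curvature, since the admissible positive values $1/n$ accumulate at $0$; so ``the accumulated curvature outgrows the budget'' is not a contradiction one can extract without further structural input. What DeVos--Mohar actually prove is that a vertex not incident with a large face satisfies $\phi(v)\geq \frac{1}{1722}$ --- the extremal degree-$3$ type is $(3,7,41)$, since $\frac13+\frac17+\frac1{41}-\frac12=\frac{1}{41\cdot 42}=\frac{1}{1722}$ --- so there are only finitely many such vertices, while large faces are shown to force local prism/antiprism structure, bounding their number as well; this is also where the constant comes from, via $|V(G)|\leq 1722\cdot\chi(S)\leq 2\cdot 1722=3444$. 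Note that your constant is slightly off: Sylvester's identity $\frac12+\frac13+\frac17+\frac1{43}=1-\frac{1}{1806}$ corresponds to the type $(3,7,43)$ of curvature exactly $-\frac{1}{1806}$, the zero boundary is $(3,7,42)$, and the minimal \emph{positive} value is $\frac{1}{1722}$, so the first bound is of order $1722\cdot\chi(S)$, not $1806\cdot\chi(S)$. Second, the discharging and optimization sketched in your last paragraph is only a plan; since the entire quantitative content of the theorem lives there, what the proposal actually establishes is the topological restriction on $S$ for finite $G$ --- neither finiteness nor the bound $|V(G)|\leq 3444$.
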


According to \cref{thm::1.1}, a natural question is what are the minimum constants for $|V(G)|$ above for $G$ such that it can be embedded into a $2$-sphere and the  projective plane, respectively. 
This problem was studied in~\cite{Chen,Reti,Zhang,Oh,Nicholson}. In particular, Nicholson and Sneddon~\cite{Nicholson} constructed some graphs of positive combinatorial curvature (everywhere) with $208$ vertices embedded into a $2$-sphere to obtain a lower bound of $|V(G)|$, and Ghidelli~\cite{Ghidelli} completely resolved this problem by getting the upper bound 208. 
Planar graphs with non-negative combinatorial curvature have also been investigated by Hua and Su~\cite{Hua1,Hua2,Hua3}.

 As we all know, the Ricci curvature plays a preeminent role in geometric analysis, which has been introduced to general metric measure spaces by Bakry-\'Emery~\cite{Bakry}, Ollivier~\cite{Ollivier}, Lott-Villani~\cite{LV} and Sturm~\cite{St1,St2}, etc. 
 This concept was extended to discrete spaces by Sammer~\cite{Sammer} while it is
 Chung and Yau~\cite{Chung} who first defined the Ricci curvature of a graph. 
 Recently, Lin, Lu and Yau~\cite{LLY} defined a new Ricci curvature on graphs by modifying Ollivier's Ricci curvature, which is called {\it Lin-Lu-Yau (Ricci) curvature} in the sequel. 
 The present paper is devoted to the investigation of  Lin-Lu-Yau curvature. We abbreviate it as LLY curvature for convenience.

 A graph is said to be {\it positively LLY-curved} if its LLY  curvature is positive everywhere. 
 Motivated by Higuchi's conjecture, Lu and Wang~\cite{planar LLY} studied the order of positively LLY-curved planar graphs.
 Besides, $\{C_3,C_5\}$-free connected positively LLY-curved graph was considered in~\cite{Lu C3}, which provides a tight bound on $|V(G)|$.
Lu and Wang~\cite{planar LLY} considered the bound on $|V(G)|$ of positively LLY-curved planar graphs with $\delta(G)\geq3$. Last but not least, Brooks, Osaye, Schenfisch, Wang and Yu~\cite{outerplanar} studied positively LLY-curved outerplanar graphs and obtained a sharp upper bound of maximum degree $\Delta(G)$. 

 \begin{theorem}[\cite{outerplanar}]
    Let $G$ be a simple positively LLY-curved outerplanar graph with $\delta(G)\geq 2$. 
    Then $\Delta(G)\leq 9$ and the upper bound is sharp.
\end{theorem}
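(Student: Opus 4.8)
The plan is to use the limit-free reformulation of M\"{u}nch and Wojciechowski, which expresses the Lin--Lu--Yau curvature of an edge $xy$ as
\[
\kappa(x,y)=\inf_{\substack{f\in\Lip(1)\\ \nabla_{xy}f=1}}\nabla_{xy}\Delta f ,
\]
where $\Delta$ is the (normalized) graph Laplacian and $\nabla_{xy}g=\bigl(g(x)-g(y)\bigr)/d(x,y)$. The decisive feature is that \emph{every} admissible test function $f$ (that is, $1$-Lipschitz with $\nabla_{xy}f=1$) gives an upper bound $\kappa(x,y)\le\nabla_{xy}\Delta f$. Accordingly I would prove the bound $\Delta(G)\le 9$ by contradiction: assuming a positively LLY-curved outerplanar graph $G$ with $\delta(G)\ge2$ and a vertex $v$ with $d_v=\Delta(G)\ge 10$, I would exhibit, on a well-chosen incident edge $vu$, an admissible $f$ with $\nabla_{vu}\Delta f\le 0$, contradicting positivity. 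Sharpness would then be settled independently by displaying one explicit graph that attains $\Delta=9$.

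Next I would record the structural input coming from outerplanarity. As outerplanar graphs have no $K_{2,3}$ and no $K_4$ minor, every edge lies in at most two triangles, and the neighbors $u_1,\dots,u_\Delta$ of a fixed vertex $v$ can be listed in the cyclic order in which they meet the outer boundary; the edges inside $N(v)$ are then non-crossing chords of this arc and hence form a very sparse, essentially path-like pattern, and the second neighborhood of $v$ is correspondingly sparse. Together with $\delta(G)\ge2$ this controls, for each incident edge $vu_i$, the triangle count $t_i\le 2$, the degree $d_{u_i}$, and---most importantly---the way in which the neighbors of $u_i$ lying outside $N(v)$ can attach back near $v$.

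The analytic core is a single test function. Fixing an edge $vu$ and normalizing $f(v)=1$, $f(u)=0$, I would push $f$ up toward the value $2$ on those neighbors of $v$ that are far from $u$ and down toward $-1$ on those neighbors of $u$ that are far from $v$, extending $1$-Lipschitzly elsewhere (e.g.\ as a McShane extension of a truncated signed distance). Evaluating $\nabla_{vu}\Delta f$ then yields a bound of the shape
\[
\kappa(v,u)\ \le\ \frac{2+t}{d_u}+\frac{2+2t}{d_v}-2 ,
\]
where $t$ is the number of triangles through $vu$---but \emph{only when the displayed extreme values are actually attainable}. Taking $v$ of maximum degree, so $d_v=\Delta\ge 10$, and checking the finitely many feasible pairs $(t,d_u)$ with $2\le d_u\le d_v$ and $d_u\ge t+1$, one finds the right-hand side to be strictly negative in every case (its maximum, at $t=2$, $d_u=3$, equals $6/d_v-2/3$). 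Thus, as soon as this test function is \emph{admissible} on even one incident edge, positivity is violated.

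This admissibility proviso is precisely where the difficulty---and the role of outerplanarity---lies, and it is the step I expect to be the main obstacle. The values $2$ and $-1$ can be realized simultaneously only if the ``far side'' of $v$ and the ``far side'' of $u$ are genuinely spread out, namely at distance at least $3$ from each other and from the triangles at $vu$; a single short (in practice, $4$-)cycle linking them pins one of the values away from its extreme and destroys the estimate. (The phenomenon is already visible in $C_5$, where the two sides wrap around to meet and the naive bound fails to be an upper bound at all.) Hence a vertex of degree $\ge 10$ could survive only if such a cycle blocked the construction on \emph{every} one of its incident edges at once; I would quantify the maximum amount of this ``return'' interference that the no-$K_{2,3}$, no-$K_4$ structure permits in a single neighborhood, and show that it suffices to rescue degree $9$ but never degree $10$. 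Finally, for sharpness I would exhibit an explicit outerplanar graph with $\delta\ge 2$ built around a degree-$9$ vertex---a suitably triangulated fan-type gadget in which every incident edge is saturated with short cycles and carries the maximum number of triangles outerplanarity allows---and verify positivity of the curvature on every edge directly from the limit-free formula.
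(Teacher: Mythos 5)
You should first know that this paper never proves the statement you were given: it is quoted verbatim from Brooks--Osaye--Schenfisch--Wang--Yu \cite{outerplanar} as motivation, so there is no internal proof to compare against, and your proposal must stand on its own. Its general framework is certainly the right family of tools, and it matches the technique this paper uses for its own $C_4$-free theorems: upper-bound $\kappa_{\mathrm{LLY}}(u,v)$ by plugging explicit $1$-Lipschitz test functions into the M\"unch--Wojciechowski formula, together with the outerplanarity facts (no $K_{2,3}$ or $K_4$ minor, hence at most two triangles per edge). Your conditional bound
\[
\kappa(v,u)\ \le\ \frac{2+t}{d_u}+\frac{2+2t}{d_v}-2
\]
is correctly computed \emph{assuming admissibility}, and the arithmetic over the feasible pairs $(t,d_u)$ with $d_v\ge 10$ is right.

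The genuine gap is the admissibility analysis, which you correctly identify as the crux but never carry out, and your own numbers show it cannot be treated as a technicality: at $d_v=9$, $t=2$, $d_u=3$ the same bound gives $\kappa\le 0$, so if the test function were always admissible your argument would prove $\Delta(G)\le 8$, contradicting the asserted sharpness at $9$. Hence in the extremal graph the construction must fail on \emph{every} edge at the hub, and the entire content of the theorem is the claim that such failure cannot occur on all incident edges once $d_v\ge 10$. The sentence ``I would quantify the maximum amount of this return interference \dots and show that it suffices to rescue degree $9$ but never degree $10$'' states the problem rather than solving it: nothing in the proposal bounds the interference, specifies the modified test functions to use when $4$-cycles pin the extreme values, or shows that some incident edge always escapes. (This is precisely the step that trivializes in the $C_4$-free setting of the present paper, which is why its proofs are short; outerplanar graphs are full of $4$-cycles, and that is where the real work lies.) The sharpness half is also missing: you exhibit no explicit graph with a degree-$9$ hub, and ``verify positivity from the limit-free formula'' is not a plug-in step, since positivity is a \emph{lower} bound on an infimum over all $1$-Lipschitz functions; test functions only give upper bounds, so one must solve the transport/dual problem on each edge or invoke the calculator of \cite{Liu}. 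Finally, a small but real slip: in Theorem \ref{thm::2.1} the constraint is $\nabla_{yx}f=1$ while the objective is $\nabla_{xy}\Delta f$ (opposite orientations); your displayed formula uses the same orientation in both, which is not the curvature.
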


 Moreover, they considered the bound on $|V(G)|$ when the connected positively LLY-curved outerplanar graph is maximal. Recently, Liu, Lu and Wang~\cite{LLW24} removed the condition “maximal”.
 
\begin{theorem}[\cite{LLW24}]
    Let $G$ be a simple connected positively LLY-curved  outerplanar graph with $\delta(G)\geq 2$. 
    Then $|V(G)|\leq 10$ and the upper bound is sharp.
\end{theorem}

The condition $\delta(G)\geq 2$ here is the origin of our condition. 
In this paper, we study positively LLY-curved $C_4$-free graphs with $\delta(G)\geq2$. 
Our motivation comes from~\cite{Lu C3} and~\cite{outerplanar}. 
In~\cref{section::3}, we will present a sharp upper bound of the maximum degree of a $C_4$-free graph with positive LLY curvature.

\begin{theorem}\label{thm::1.4}
    Let $G$ be a simple positively LLY-curved $C_4$-free graph with $\delta(G)\geq 2$. 
    Then $\Delta(G)\leq 6$ and the upper bound is sharp.
\end{theorem}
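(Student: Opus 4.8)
The plan is to exploit the limit-free formulation of Münch and Wojciechowski. Writing $\mathcal{L}f(x)=\frac{1}{d_x}\sum_{z\sim x}\bigl(f(x)-f(z)\bigr)$ for the normalized graph Laplacian, their theorem expresses the Lin--Lu--Yau curvature of an edge $xy$ as
$$\kappa(x,y)=\inf_{\substack{f\in\Lip(1)\\ f(x)-f(y)=1}}\bigl(\mathcal{L}f(x)-\mathcal{L}f(y)\bigr).$$
Since this is an infimum, \emph{every} $1$-Lipschitz $f$ with $f(x)-f(y)=1$ furnishes an upper bound on $\kappa(x,y)$. My strategy is therefore to assume, for contradiction, that $G$ is positively curved with $\Delta(G)\ge 7$, fix a vertex $v$ of degree $\Delta(G)$, and build for one well-chosen incident edge a cheap test function that already forces $\kappa\le 0$.

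First I would record the structural consequences of the $C_4$-free hypothesis. If two vertices had two common neighbours they would span a $C_4$, so every pair of vertices has at most one common neighbour; in particular every edge lies in at most one triangle, and the neighbourhood $N(v)$ of any vertex induces a graph of maximum degree at most $1$, i.e.\ a matching. Hence the edges at $v$ split into \emph{triangle edges} $vu$ (where $u$ has a unique partner $b\in N(v)$ with $u\sim b$) and \emph{singleton edges} (lying in no triangle), and if every edge at $v$ is a triangle edge then $N(v)$ is perfectly matched, so $d_v$ is even.

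The heart of the argument is two test-function estimates, both starting from $f(v)=1$, $f(u)=0$. For a \emph{singleton} edge $vu$ I set $f\equiv 2$ on $N(v)\setminus\{u\}$ and $f\equiv 0$ on $N(u)\setminus\{v\}$, which yields
$$\kappa(v,u)\le \frac{2}{d_v}+\frac{1}{d_u}-1;$$
for a \emph{triangle} edge $vu$ with partner $b$ I additionally put $f(b)=0$ and set $f\equiv 2$ on $N(v)\setminus\{u,b\}$, $f\equiv 0$ on $N(u)\setminus\{v,b\}$, obtaining
$$\kappa(v,u)\le \frac{4}{d_v}+\frac{1}{d_u}-1.$$
The step I expect to be the main obstacle is verifying that these prescriptions really are $1$-Lipschitz, so that they extend (by McShane extension) to admissible global functions. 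The only dangerous pairs are a ``value-$2$'' vertex $a\in N(v)$ and a ``value-$0$'' vertex $w$, which must lie at distance at least $2$; if $w\in N(u)\setminus\{v,b\}$ then $a\sim w$ would close the $4$-cycle $v\,a\,w\,u$, while the remaining cases $w\in\{u,b\}$ are excluded because the matching structure of $N(v)$ forbids $a\sim u$ and $a\sim b$. Thus it is precisely the excluded quadrilateral that legitimizes the test functions.

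Finally I would assemble the contradiction using $\delta(G)\ge 2$, so that $d_u\ge 2$. If $v$ has a singleton edge, the first estimate gives $\kappa(v,u)\le \frac{2}{7}+\frac{1}{2}-1<0$. Otherwise every edge at $v$ is a triangle edge, whence $\Delta(G)$ is even and so $\Delta(G)\ge 8$, and the second estimate gives $\kappa(v,u)\le \frac{4}{8}+\frac{1}{2}-1=0$. In either case $G$ fails to be positively curved, so $\Delta(G)\le 6$. Sharpness is witnessed by the friendship graph $F_3$, which is $C_4$-free with $\delta=2$ and $\Delta=6$: a short computation with the same functions shows its central edges have curvature $\tfrac16$ and its triangle edges curvature $\tfrac32$, so $F_3$ is positively LLY-curved.
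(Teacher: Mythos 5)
Your proof is correct and takes essentially the same route as the paper: the same M\"unch--Wojciechowski test-function upper bounds (your triangle-edge function is a mirror image of the paper's, yielding the identical estimate $\frac{4}{d_v}+\frac{1}{d_u}-1$), the same unique-common-neighbour/matching structure with the parity argument ruling out odd maximum degree $7$, and $F_3$ as the sharpness example (your values $\frac{1}{6}$ and $\frac{3}{2}$ are correct). The only real difference is that you re-derive the singleton-edge bound $\frac{2}{d_v}+\frac{1}{d_u}-1$ with an explicit test function, whereas the paper imports it as Lemma \ref{lemma::3.2} from Lin--Lu--Yau and packages its consequence as Lemma \ref{lemma::3.3}, making your write-up slightly more self-contained.
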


 The upper bound of maximum degree $6$ can be achieved by graph $F_3$ , which is also an outerplanar graph. 
 Here $F_k$ is the friendship graph obtained from $k$ triangles by sharing a common vertex. 
 In fact, we can further classify all connected graphs that satisfy the condition in \cref{thm::1.4} and have the following theorem.
 
\begin{theorem}\label{thm::1.5}
    Let $G$ be a simple connected positively LLY-curved $C_4$-free graph with $\delta(G)\geq 2$. 
    Then $G\cong C_3$, $G\cong C_5$, $G\cong T$, $G\cong H$ or $G\cong F_k$ $(k=2,3)$.
\end{theorem}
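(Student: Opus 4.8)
The plan is to combine the degree bound $\Delta(G)\le 6$ from Theorem \ref{thm::1.4} with the Münch–Wojciechowski limit-free formula, so as to turn positivity of the LLY curvature into finitely many local constraints, and then to enumerate. Throughout I would use the two structural consequences of $C_4$-freeness: any two vertices have at most one common neighbour, so (a) every edge lies in at most one triangle and (b) away from that (at most one) triangle the ball around an edge is tree-like. The first thing to settle is finiteness of $G$, which is not assumed. Since $2\le d_v\le 6$ for every $v$, the pairwise distances among the at most $d_x+d_y+2\le 14$ vertices of $\supp(\mu_x)\cup\supp(\mu_y)$ are all at most $3$, so $\kappa(x,y)$ depends only on the isomorphism type of a bounded ball about the edge $xy$; with $\Delta(G)\le 6$ there are only finitely many such types, hence the curvature takes values in a finite set. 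A finite set of positive numbers is bounded below by some $\kappa_0>0$, so by the Lin–Lu–Yau Bonnet–Myers theorem $\operatorname{diam}(G)\le 2/\kappa_0$, and together with $\Delta(G)\le 6$ this forces $G$ to be finite. This reduces the statement to an intricate but terminating case analysis.

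First I would dispose of the triangle-free case. If $G$ has no triangle then, being $C_4$-free, connected and of minimum degree at least $2$, it has girth at least $5$. Evaluating the formula on an edge $xy$ with locally tree-like neighbourhood shows $\kappa(x,y)\le 0$ as soon as $\max\{d_x,d_y\}\ge 3$; the only way a triangle-free edge can be positively curved is the $C_5$ ``wrap-around'', in which the outer neighbours of $x$ and of $y$ are joined by a common neighbour. Propagating this requirement along the graph forces every vertex to have degree $2$, so $G$ is a cycle, and among cycles only $C_5$ survives, since $C_4$ is excluded and $\kappa(C_n)=0$ for $n\ge 6$. This yields $G\cong C_5$.

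The substantial part is the case where $G$ contains a triangle. I would first analyse a single triangle $xyz$: by (a) its three edges carry the only triangles on $x,y,z$, and the remaining neighbours attach tree-like. Evaluating the curvature on a triangle edge bounds $d_x+d_y$ from above, while on a non-triangle edge incident to a triangle vertex it shows, exactly as above, that positivity demands a $C_5$ wrap-around. I would then split into two subcases. If every vertex lies in a triangle, then degree-$2$ vertices are pure ``triangle leaves'', two triangles cannot share an edge (this would create a $C_4$), and any two vertex-disjoint triangles would be linked by a non-triangle edge or path whose positivity fails; hence all triangles share a single common vertex $v$. As $d_v=2k\le 6$ we get $k\le 3$, and with $\delta(G)\ge 2$ this leaves exactly $F_2$ and $F_3$ (with $F_1=C_3$). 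If some vertex lies in no triangle, the wrap-around requirement propagates from the non-triangle edges, and a short rigidity argument forces a single triangle whose three vertices are joined to one common vertex by internally disjoint paths of length two through degree-$2$ vertices, i.e. $G\cong T$.

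The main obstacle I anticipate is this last subcase together with the boundary computations around it. The Jost–Liu-type estimates are tight only away from the extremal configurations, so for $F_3$ (centre of degree $6$) and for every non-triangle edge of $T$ one must compute the curvature exactly from the limit-free formula to confirm it is \emph{strictly} positive, and, crucially, to show that any enlargement of $F_3$ or $T$, or any ``outer'' vertex of degree $>2$, drives the curvature of some edge down to $0$ or below. Organising these extremal calculations so that the branching genuinely halts, rather than continuing indefinitely, is where the real care is needed; the uniform lower bound $\kappa_0$ and the resulting diameter bound from the first paragraph are precisely what keep the enumeration finite.
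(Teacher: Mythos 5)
Your scaffolding is genuinely sound and close to the paper's: the limit-free formula (Theorem \ref{thm::2.1}), the degree bound of Theorem \ref{thm::1.4}, and the two $C_4$-free facts you list are exactly the paper's working tools, and your finiteness argument (finitely many radius-$3$ ball types when $\Delta(G)\le 6$, hence a uniform $\kappa_0>0$ over the finite set of attainable curvature values, hence a diameter bound via the Lin--Lu--Yau Bonnet--Myers theorem) is a correct observation that the paper leaves implicit. But the proposal stops where the theorem actually lives: each of your pivotal eliminations is asserted rather than proved, and none of them follows from the soft bounds you invoke. In the triangle-free case, once $C_5$'s are present Lemma \ref{lemma::3.2} only gives $\kappa_{\mathrm{LLY}}(x,y)\le \tfrac{1}{3}$ for an edge with degrees $(3,2)$, so ``propagating the wrap-around forces all degrees $2$'' is precisely the hard content; the paper needs Claims 1--3 with bespoke $1$-Lipschitz test functions, and even then one residual configuration (the graph $G_1$, two pentagons glued) survives every such estimate and is eliminated only by an exact curvature computation. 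Nothing in your sketch would detect $G_1$. Likewise, in your ``every vertex lies in a triangle'' subcase, ruling out vertex-disjoint triangles linked by a non-triangle edge or path does \emph{not} yield ``all triangles share a common vertex'': triangles can form a chain, pairwise sharing single vertices --- this is the paper's graph $F$ of Claim 5, where the two outer triangles are disjoint but linked \emph{through a third triangle}, so your elimination never touches it, and the paper needs three separate test-function computations (split by whether $d(u_i,v_j)=2$ or $3$, and by the degrees involved) to exclude it.

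Your mixed subcase has the same problem in sharper form: ``some vertex lies in no triangle'' does not force a single triangle, since it includes graphs like $F_2$ or $F_3$ with a path of degree-$2$ vertices attached between two outer vertices --- several triangles coexisting with non-triangle vertices --- and these near-misses are exactly what the paper's Cases 3 and 4 spend their explicit computations excluding (e.g.\ showing that an outer vertex of $F_2$ of degree $3$ forces $\kappa_{\mathrm{LLY}}(y,v)\le 0$, with separate functions for $d(v_0,u_i)\ge 2$ and $v_0\sim u_2$). Finally, your termination device is not effective: $\kappa_0$ is never computed, so $\operatorname{diam}(G)\le 2/\kappa_0$ bounds nothing usable, whereas the paper terminates by an entirely concrete mechanism you have available but do not exploit --- Lemma \ref{lemma::3.3} plus $C_4$-freeness makes every edge at a vertex of degree $\ge 4$ lie in a unique triangle, pairing up the edges there, so $\Delta(G)$ is even whenever $\Delta(G)\ge 4$ and the whole classification reduces to the four cases $\Delta(G)\in\{2,3,4,6\}$. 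In short: right frame and a nice finiteness supplement, but the decisive case-by-case curvature estimates with explicit test functions --- which constitute essentially all of the paper's proof --- are missing, and two of your structural shortcuts (common vertex for all triangles; single triangle in the mixed case) are false as stated without them.
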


 \begin{figure}[htbp]
     \centering
     \begin{tikzpicture}[scale=1.3, x=1.00mm, y=0.7mm, inner xsep=0pt, inner ysep=0pt, outer xsep=0pt, outer ysep=0pt]
\definecolor{L}{rgb}{0,0,0}
\definecolor{F}{rgb}{0,0,0}
\path[line width=0.30mm, draw=L, fill=F] (55.00,90.00) circle (1.00mm);
\path[line width=0.30mm, draw=L, fill=F] (55.00,60.00) circle (1.00mm);
\path[line width=0.30mm, draw=L, fill=F] (65.00,75.00) circle (1.00mm);
\path[line width=0.30mm, draw=L, fill=F] (75.00,75.00) circle (1.00mm);
\path[line width=0.30mm, draw=L, fill=F] (70.00,90.00) circle (1.00mm);
\path[line width=0.30mm, draw=L, fill=F] (70.00,60.00) circle (1.00mm);
\path[line width=0.30mm, draw=L, fill=F] (85.00,75.00) circle (1.00mm);
\path[line width=0.30mm, draw=L] (55.00,90.00) -- (55.00,60.00);
\path[line width=0.30mm, draw=L] (55.00,90.00) -- (65.00,75.00);
\path[line width=0.30mm, draw=L] (55.00,60.00) -- (65.00,75.00);
\path[line width=0.30mm, draw=L] (65.00,75.00) -- (75.00,75.00);
\path[line width=0.30mm, draw=L] (55.00,60.00) -- (70.00,60.00);
\path[line width=0.30mm, draw=L] (55.00,90.00) -- (70.00,90.00);
\path[line width=0.30mm, draw=L] (85.00,75.00) -- (75.00,75.00);
\path[line width=0.30mm, draw=L] (85.00,75.00) -- (70.00,90.00);
\path[line width=0.30mm, draw=L] (85.00,75.00) -- (70.00,60.00);
\end{tikzpicture}
    \caption{Graph $T$ with positive LLY curvature.}
    \label{figure T}
 \end{figure}

\begin{figure}[htbp]
    \centering
   \begin{tikzpicture}[x=2.0mm, y=2.0mm,
    inner xsep=0pt, inner ysep=0pt,
    outer xsep=0pt, outer ysep=0pt, scale=0.3]

    \definecolor{L}{rgb}{0,0,0}
    \definecolor{F}{rgb}{0,0,0}

\coordinate (p1)  at ( 40, 48);
\coordinate (p2)  at (-40, 48);
\coordinate (p3)  at (  0, 32);

\coordinate (p4)  at (  0, 20);
\coordinate (p7)  at ( 18,  6);
\coordinate (p8)  at (-18,  6);
\coordinate (p9)  at (-10,-14);
\coordinate (p10) at ( 10,-14);

\coordinate (p5)  at (-27, 10);
\coordinate (p6)  at ( 27, 10);

\coordinate (p11) at (-52,-18);
\coordinate (p12) at ( 52,-18);
\coordinate (p13) at (-18,-26);
\coordinate (p14) at ( 18,-26);
\coordinate (p15) at (  0,-48);

\draw[line width=0.30mm, draw=L] (p1)--(p2);
\draw[line width=0.30mm, draw=L] (p1)--(p3);
\draw[line width=0.30mm, draw=L] (p1)--(p6);
\draw[line width=0.30mm, draw=L] (p1)--(p12);

\draw[line width=0.30mm, draw=L] (p2)--(p3);
\draw[line width=0.30mm, draw=L] (p2)--(p5);
\draw[line width=0.30mm, draw=L] (p2)--(p11);

\draw[line width=0.30mm, draw=L] (p3)--(p7);
\draw[line width=0.30mm, draw=L] (p3)--(p8);

\draw[line width=0.30mm, draw=L] (p4)--(p5);
\draw[line width=0.30mm, draw=L] (p4)--(p6);
\draw[line width=0.30mm, draw=L] (p4)--(p9);
\draw[line width=0.30mm, draw=L] (p4)--(p10);

\draw[line width=0.30mm, draw=L] (p5)--(p11);

\draw[line width=0.30mm, draw=L] (p6)--(p12);

\draw[line width=0.30mm, draw=L] (p7)--(p8);
\draw[line width=0.30mm, draw=L] (p7)--(p14);

\draw[line width=0.30mm, draw=L] (p8)--(p10);
\draw[line width=0.30mm, draw=L] (p8)--(p13);

\draw[line width=0.30mm, draw=L] (p9)--(p5);
\draw[line width=0.30mm, draw=L] (p9)--(p7);
\draw[line width=0.30mm, draw=L] (p9)--(p14);

\draw[line width=0.30mm, draw=L] (p10)--(p6);
\draw[line width=0.30mm, draw=L] (p10)--(p13);

\draw[line width=0.30mm, draw=L] (p11)--(p13);
\draw[line width=0.30mm, draw=L] (p11)--(p15);

\draw[line width=0.30mm, draw=L] (p12)--(p14);
\draw[line width=0.30mm, draw=L] (p12)--(p15);

\draw[line width=0.30mm, draw=L] (p13)--(p15);

\draw[line width=0.30mm, draw=L] (p14)--(p15);

\path[line width=0.30mm, draw=L, fill=F] (p1)  circle (4 mm);
\path[line width=0.30mm, draw=L, fill=F] (p2)  circle (4 mm);
\path[line width=0.30mm, draw=L, fill=F] (p3)  circle (4 mm);

\path[line width=0.30mm, draw=L, fill=F] (p4)  circle (4 mm);
\path[line width=0.30mm, draw=L, fill=F] (p5)  circle (4 mm);
\path[line width=0.30mm, draw=L, fill=F] (p6)  circle (4 mm);

\path[line width=0.30mm, draw=L, fill=F] (p7)  circle (4 mm);
\path[line width=0.30mm, draw=L, fill=F] (p8)  circle (4 mm);
\path[line width=0.30mm, draw=L, fill=F] (p9)  circle (4 mm);
\path[line width=0.30mm, draw=L, fill=F] (p10) circle (4 mm);

\path[line width=0.30mm, draw=L, fill=F] (p11) circle (4 mm);
\path[line width=0.30mm, draw=L, fill=F] (p12) circle (4 mm);
\path[line width=0.30mm, draw=L, fill=F] (p13) circle (4 mm);
\path[line width=0.30mm, draw=L, fill=F] (p14) circle (4 mm);
\path[line width=0.30mm, draw=L, fill=F] (p15) circle (4 mm);

\end{tikzpicture}
    \caption{The line graph of Peterson graph denoted by $H$}
    \label{line graph of peterson graph}
\end{figure}
The graph $T$ is shown in~\cref{figure T} and the graph $H$ is shown in~\cref{line graph of peterson graph}.
Graph $H$ is also a distance regular graph with diameter three.
All of these graphs can be checked to be positively LLY-curved by the graph curvature calculator~\cite{Liu}, which is a freely accessible interactive app at 

\begin{center}
    https://www.mas.ncl.ac.uk/graph-curvature/
\end{center}

Here are some notations. 
Let $u \sim v$ denote that $u$ is adjacent to $v$.
For $v\in V(G)$, let $N(v)$ denote the neighborhood of $v$ in $G$.  
We use $N[v]$ to denote $N(v)\cup \{v\}$. 
For $u, v\in V(G)$, $d(u,v)$ denotes the number of edges in a shortest path between $u$ and $v$. 
A graph $G$ is called $F$-free if $G$ contains no $F$ as a subgraph.

\section{Preliminaries}\label{section::2}

In this section, we first recall the definition of LLY curvature.

A {\it probability distribution} over $V(G)$ is a mapping $m: V\rightarrow [0,1]$ which satisfies $\sum \limits_{x\in V(G)} m(x)=1$. Given two probability distributions $m_1,m_2$ with finite supports (which means $\{x \in V(G): m_i(x) \neq 0\}$ is a finite set), a {\it coupling} between $m_1$ and $m_2$ is a mapping $A: V \times V \rightarrow [0,1]$ with finite support such that
\begin{align*}
    \sum_{y \in V} A(x, y)=m_1(x) \quad \text { and } \quad \sum_{x \in V} A(x, y)=m_2(y).
\end{align*}
The {\it transportation distance} between two probability distributions $m_1$ and $m_2$ is defined as
\begin{align*}
    W(m_1, m_2) \coloneq \inf _A \sum _{x, y \in V} A(x, y) d(x, y),
\end{align*}
where the infimum is taken over all couplings $A$ between $m_1$ and $m_2$. 
By the Kantorovich duality theorem of a linear optimization problem, the transportation distance can also be written as
\begin{align*}
    W (m_1, m_2)=\sup _f \sum_{x \in V} f(x) [m_1(x) - m_2(x)],
\end{align*}
where the supremum is taken over all $1$-Lipschitz functions $f$, i.e.,
\begin{align*}
    |f(x) - f(y)| \leq d(x,y), \quad \forall\,x,y\in V(G).
\end{align*}
 Denote the set of all 1-Lipschitz functions $f$ by $\Lip(1)$. 
 It follows by~\cite[Remark]{LLY} that any $1$-Lipschitz function $f$ over a metric subspace can be extended to a $1$-Lipschitz function over the whole metric space. 
 Thus, $W(m_1,m_2)$ only depends on distances among vertices in $\supp(m_1)\cup \supp(m_2)$ (This is clear from the aforementioned definition) and we only need to choose the local $1$-Lipschitz functions $f$ on $\supp(m_1)\cup \supp(m_2)$.

A {\it random walk $m$} on a graph $G$ is defined as a family of probability measures ${m_v(\cdot)}_{v \in V(G)}$ such that $m_v(u) = 0$ for all $uv \notin    E(G)$. 
It follows that $m_v(u)\geq 0$ for all $u,v\in V(G)$ and $\sum\limits_{u\in N[v]}m_v(u)=1$ for all $v\in V(G)$.
Given a random walk $m$,  {\it Ollivier-Ricci curvature} is defined as
\begin{align*}
    \kappa (x, y) \coloneq 1 - \frac{W \left(m_x,    m_y\right)}{d(x, y)}.
\end{align*}
For each $x\in V(G)$, here we consider the {\it $\alpha$-lazy random walk} $m_x^\alpha(v)$ for $0\leq \alpha<1$, which is defined as
\begin{align*}
    m_x^\alpha(v)= 
    \begin{cases}
         \alpha & \text { if } v=x, \\ 
        (1-\alpha)/ d_x & \text { if } v \in N(x),\\ 
         0 & \text { otherwise.}
    \end{cases}
\end{align*}
In~\cite{LLY}, Lin, Lu and Yau modified Ollivier's definition of Ricci curvature in terms of $\alpha$-lazy random walks. 
More precisely, they defined  {\it $\alpha$-Ricci-curvature} $\kappa_\alpha(x,y)$ for any $x \neq y \in V(G)$ to be
\begin{align*}
    \kappa_\alpha(x, y)=1-\frac{W\left(m_x^\alpha, m_y^\alpha\right)}{d(x, y)}
\end{align*}
and  {\it Lin-Lu-Yau curvature} $\kappa_{\mathrm{LLY}}(x,y)$ to be
$$\kappa_{\mathrm{LLY}}(x, y)=\lim _{\alpha \rightarrow 1} \frac{\kappa_\alpha(x, y)}{1-\alpha}.$$

When $\alpha$ is large enough, the ratio $\kappa_\alpha(x,y)/(1-\alpha)$ is constant. Indeed, it was proved in~\cite{BCLMP18} that 
\begin{equation}\label{eq:bourne}
    \kappa_{\mathrm{LLY}}(x,y)=\frac{\kappa_\alpha(x,y)}{1-\alpha}, \,\,\text{for any $\alpha\in \left[\frac{1}{\max\{d_x,d_y\}+1},1\right]$}.
\end{equation} 

Although  LLY curvature $\kappa_{\mathrm{LLY}}(x,y)$ is defined for the pairs $x,y\in V(G)$, for a graph with positive LLY curvature we only need to consider $\kappa_{\mathrm{LLY}}(x,y)$ for $xy\in E(G)$ due to the following lemma.

\begin{lemma}[\cite{LLY}]

    Let $G$ be a connected graph. If $\kappa_{\mathrm{LLY}}(x,y)\geq\kappa_0$ for every edge $xy\in E(G)$, then $\kappa_{\mathrm{LLY}}(x,y)\geq\kappa_0$ for any pair of $\{x,y\}$.
\end{lemma}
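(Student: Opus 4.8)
The plan is to exploit the fact that the transportation distance $W$ is a genuine metric on probability distributions, so it satisfies the triangle inequality, and to combine this with the additivity of graph distance along a geodesic. First I would fix a pair $x,y$ with $d(x,y)=n\geq 2$ (the case $n=1$ being precisely the hypothesis) and choose a geodesic $x=x_0,x_1,\dots,x_n=y$, so that $d(x_i,x_{i+1})=1$ for each $i$. Applying the triangle inequality for $W$ repeatedly to the intermediate measures $m_{x_0}^\alpha,\dots,m_{x_n}^\alpha$ gives
$$W(m_x^\alpha,m_y^\alpha)\leq \sum_{i=0}^{n-1} W\bigl(m_{x_i}^\alpha,m_{x_{i+1}}^\alpha\bigr).$$

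Next I would rewrite each summand using the definition of the $\alpha$-Ricci curvature on the \emph{edge} $x_ix_{i+1}$. Since $d(x_i,x_{i+1})=1$, we have $W(m_{x_i}^\alpha,m_{x_{i+1}}^\alpha)=1-\kappa_\alpha(x_i,x_{i+1})$, whence
$$W(m_x^\alpha,m_y^\alpha)\leq n-\sum_{i=0}^{n-1}\kappa_\alpha(x_i,x_{i+1}).$$
Dividing by $d(x,y)=n$ and substituting into $\kappa_\alpha(x,y)=1-W(m_x^\alpha,m_y^\alpha)/n$ yields
$$\kappa_\alpha(x,y)\geq \frac{1}{n}\sum_{i=0}^{n-1}\kappa_\alpha(x_i,x_{i+1}).$$

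Finally I would divide through by $1-\alpha>0$, which preserves the inequality, and pass to the limit $\alpha\to 1$. Because the sum is finite, the limit distributes over it, and by definition of the LLY curvature the right-hand side converges to $\frac{1}{n}\sum_i \kappa_{\mathrm{LLY}}(x_i,x_{i+1})$; applying the edge hypothesis $\kappa_{\mathrm{LLY}}(x_i,x_{i+1})\geq\kappa_0$ then gives $\kappa_{\mathrm{LLY}}(x,y)\geq\frac{1}{n}\cdot n\kappa_0=\kappa_0$, as required. The only point demanding care is the interchange of the limit with the finite sum, together with the existence of the defining limit $\lim_{\alpha\to1}\kappa_\alpha(\cdot,\cdot)/(1-\alpha)$ for each edge; both are routine once the existence of the LLY limit (established in \cite{LLY}) is granted. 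I expect no substantial obstacle here—the entire argument rests on the single observation that $W$ is a metric, so the main effort is simply bookkeeping along the chosen geodesic.
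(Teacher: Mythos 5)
Your proof is correct and follows essentially the same route as the original argument in \cite{LLY} (the paper itself only cites the lemma without proof): decompose along a geodesic, apply the triangle inequality for the transportation distance $W$ to the measures $m_{x_i}^\alpha$, use $W(m_{x_i}^\alpha,m_{x_{i+1}}^\alpha)=1-\kappa_\alpha(x_i,x_{i+1})$ on each edge, average, divide by $1-\alpha$, and let $\alpha\to1$. The two points you flag as needing care --- existence of the limit defining $\kappa_{\mathrm{LLY}}$ (for the pair $(x,y)$ as well as for the edges) and passing the inequality through the limit of a finite sum --- are indeed exactly what \cite{LLY} supplies, so no gap remains.
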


For a graph $G$, the {\it combinatorial graph Laplacian} $\Delta$ is defined as
\begin{align*}
    \Delta f(x)=\frac{1}{d_x} \sum_{y \in N(x)}(f(y)-f(x)).
\end{align*}
Recently,  M\"{u}nch and Wojciechowski~\cite{Munch} proved a limit-free formulation of  LLY curvature by using the {\it graph Laplacian}, which is also a version of the Kantorovich duality theorem and plays an important role in our proof. 

\begin{theorem}[\cite{Munch}]\label{thm::2.1}

    Let $G$ be a simple graph.  
    For $x\neq y\in V(G)$, there holds
    \begin{align*}
        \kappa_{\mathrm{LLY}}(x, y)=\inf _{\substack{f \in \Lip(1) \\ \nabla_{y x} f=1}} \nabla_{x y} \Delta f,
    \end{align*}
    where $\nabla_{x y} f=\frac{f(x)-f(y)}{d(x, y)}$ is the gradient function.
\end{theorem}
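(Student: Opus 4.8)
The plan is to run the Kantorovich dual against the $\alpha$-lazy walk and extract the linear-in-$(1-\alpha)$ term, which the graph Laplacian packages automatically. First I would record the elementary identity
$$\frac{1}{d_x}\sum_{z\in N(x)} f(z) = f(x) + \Delta f(x),$$
immediate from the definition of $\Delta$, so that for every $f\in\Lip(1)$,
$$\sum_{z\in V} f(z)\,m_x^\alpha(z) = \alpha f(x) + (1-\alpha)\,\frac{1}{d_x}\sum_{z\in N(x)} f(z) = f(x) + (1-\alpha)\,\Delta f(x).$$
Subtracting the same expression at $y$ and inserting it into the dual formula $W(m_1,m_2)=\sup_{f\in\Lip(1)}\sum_z f(z)[m_1(z)-m_2(z)]$ gives, with $d:=d(x,y)$,
$$W\!\left(m_x^\alpha,m_y^\alpha\right) = \sup_{f\in\Lip(1)}\Big\{[f(x)-f(y)] + (1-\alpha)[\Delta f(x)-\Delta f(y)]\Big\}.$$
By the Remark recalled in the preliminaries, the supremum may be taken over $1$-Lipschitz functions on the finite set $N[x]\cup N[y]$, which I will exploit for compactness.

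Then I would write $\kappa_\alpha(x,y)=1-W(m_x^\alpha,m_y^\alpha)/d$ and observe that the leading term $f(x)-f(y)$ is bounded by $d$, with equality precisely when $f$ saturates the Lipschitz bound along the pair $x,y$. Heuristically, as $\alpha\to1$ the factor $(1-\alpha)$ makes the Laplacian term negligible against the gap $d-[f(x)-f(y)]$ for any non-saturating $f$; hence the optimizers should concentrate on the affine slice $\{f:f(x)-f(y)=d\}$, on which the $(1-\alpha)$-coefficient reads off the derivative. The content of the theorem is to make this interchange of $\lim_{\alpha\to1}$ and the optimization rigorous, and this is the step I expect to be the main obstacle.

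To handle it, I would use that the supports of $m_x^\alpha$ and $m_y^\alpha$ lie in the fixed finite set $N[x]\cup N[y]$, so $W(m_x^\alpha,m_y^\alpha)$ is the optimal value of a finite transportation linear program whose supply--demand vector depends affinely on $\alpha$. Parametric linear programming then shows that $\alpha\mapsto W(m_x^\alpha,m_y^\alpha)$ is convex and piecewise linear, whence $\kappa_\alpha(x,y)$ is concave and piecewise linear on $[0,1]$ and, in particular, linear on a terminal interval $[\alpha_0,1]$. Since $m_x^1=\delta_x$ and $m_y^1=\delta_y$ force $W=d$ and $\kappa_1=0$, the quotient $\kappa_\alpha/(1-\alpha)$ is constant on that terminal interval, so the limit defining $\kappa_{\mathrm{LLY}}$ exists and equals that slope. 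On the terminal piece a single dual optimizer $f^\ast$ is optimal for all $\alpha$ near $1$; matching $\kappa_1=0$ forces $f^\ast(x)-f^\ast(y)=d$, and reading off the coefficient of $(1-\alpha)$ yields
$$\kappa_{\mathrm{LLY}}(x,y) = -\frac{1}{d}\sup_{\substack{f\in\Lip(1)\\ f(x)-f(y)=d}}[\Delta f(x)-\Delta f(y)] = \frac{1}{d}\inf_{\substack{f\in\Lip(1)\\ f(x)-f(y)=d}}[\Delta f(y)-\Delta f(x)].$$

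Finally I would substitute $g=-f$. Since $g\in\Lip(1)\iff f\in\Lip(1)$ and $\Delta$ is linear, the constraint $f(x)-f(y)=d$ becomes $g(y)-g(x)=d$, i.e. $\nabla_{yx}g=1$, while $\Delta f(y)-\Delta f(x)=\Delta g(x)-\Delta g(y)$. Therefore
$$\kappa_{\mathrm{LLY}}(x,y) = \inf_{\substack{g\in\Lip(1)\\ \nabla_{yx}g=1}}\frac{\Delta g(x)-\Delta g(y)}{d} = \inf_{\substack{g\in\Lip(1)\\ \nabla_{yx}g=1}}\nabla_{xy}\Delta g,$$
which is the asserted identity. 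Everything outside the parametric-LP/limit step is a direct computation; the care needed there---establishing piecewise linearity near $\alpha=1$ and the existence of a common dual optimizer---is the crux.
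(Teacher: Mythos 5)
Your argument is correct, but note that the paper itself offers no proof of this statement: Theorem \ref{thm::2.1} is quoted verbatim from M\"unch--Wojciechowski \cite{Munch}, so the only comparison available is with that source. Your route differs from theirs in the key limit-interchange step. M\"unch--Wojciechowski work with general graph Laplacians and justify the passage $\alpha\to 1$ by a compactness argument: they take optimal Kantorovich potentials $f_{\alpha_n}$ for a sequence $\alpha_n\to 1$, normalize, extract a convergent subsequence on the finite relevant vertex set, and show the limit potential is $1$-Lipschitz with $\nabla_{yx}f=1$ and attains the infimum. You instead observe that, after restricting to $N[x]\cup N[y]$, the dual representation $W(m_x^\alpha,m_y^\alpha)=\sup_f\{[f(x)-f(y)]+(1-\alpha)[\Delta f(x)-\Delta f(y)]\}$ exhibits $W$ as an upper envelope of affine functions of $\alpha$, hence convex piecewise linear, linear on a terminal interval $[\alpha_0,1]$, with $\kappa_1=0$ pinning the terminal optimizer to the face $\{f(x)-f(y)=d\}$; the slope then reads off the formula, and $g=-f$ converts it to the stated form. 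This parametric-LP argument is a clean, elementary, self-contained proof for simple (locally finite) graphs, and as a bonus it shows $\kappa_\alpha/(1-\alpha)$ is eventually \emph{constant} in $\alpha$, not merely convergent; what it gives up relative to \cite{Munch} is generality (weighted graphs, non-normalized or unbounded Laplacians, where the finite-vertex polytope structure is less immediate). One point you should make explicit to have finitely many affine pieces: quotient out additive constants (e.g.\ normalize $f(x)=0$), so that the dual feasible set $\{f:|f(u)-f(v)|\le d(u,v)\ \text{on}\ N[x]\cup N[y],\ f(x)=0\}$ is a compact polytope with finitely many vertices, and note that vertices of the face $\{f(x)-f(y)=d\}$ are vertices of the whole polytope, so your supremum over the face is attained and equals the terminal slope. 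With that detail added, the proof is complete.
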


\section{Proofs of Theorems \ref{thm::1.4} and \ref{thm::1.5}}\label{section::3}
First, we recall two lemmas given by Lin, Lu, and Yau~\cite{girth5}.

\begin{lemma}[\cite{girth5}]\label{lemma::3.1}

    Suppose that an edge $xy$ in a graph $G$ is not in any $C_3$, $C_4$, or $C_5$. 
    Then $\kappa_{\mathrm{LLY}}(x,y) =\frac{2}{d_x}+\frac{2}{d_y}-2$.
\end{lemma}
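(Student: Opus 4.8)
The plan is to apply the limit-free formula of Münch and Wojciechowski (Theorem \ref{thm::2.1}). Since $xy\in E(G)$ we have $d(x,y)=1$, so the constraint $\nabla_{yx}f=1$ reads $f(y)-f(x)=1$ and the objective $\nabla_{xy}\Delta f$ collapses to $\Delta f(x)-\Delta f(y)$. As this expression is invariant under adding a constant to $f$, I would normalize $f(x)=0$ and $f(y)=1$, so that the task becomes evaluating
\[
\kappa_{\mathrm{LLY}}(x,y)=\inf_{\substack{f\in\Lip(1)\\ f(x)=0,\,f(y)=1}}\bigl(\Delta f(x)-\Delta f(y)\bigr).
\]

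For the lower bound I would show that the Lipschitz constraints \emph{alone} force $\Delta f(x)-\Delta f(y)\geq\frac{2}{d_x}+\frac{2}{d_y}-2$. Every $z\in N(x)$ satisfies $f(z)\geq f(x)-1=-1$, so isolating the neighbour $y$ (with $f(y)=1$) gives $\Delta f(x)=\frac{1}{d_x}\sum_{z\in N(x)}f(z)\geq\frac{1}{d_x}\bigl(1-(d_x-1)\bigr)=\frac{2}{d_x}-1$. Dually, every $z\in N(y)$ satisfies $f(z)\leq f(y)+1=2$, so isolating $x$ (with $f(x)=0$) gives $\Delta f(y)=\frac{1}{d_y}\sum_{z\in N(y)}(f(z)-1)\leq\frac{1}{d_y}\bigl(-1+(d_y-1)\bigr)=1-\frac{2}{d_y}$. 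Subtracting yields the claimed inequality, and notably this half uses no girth hypothesis at all.

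For the matching upper bound I would exhibit an explicit extremal function: set $f(x)=0$, $f(y)=1$, $f\equiv-1$ on $N(x)\setminus\{y\}$, and $f\equiv2$ on $N(y)\setminus\{x\}$. A direct computation then gives $\Delta f(x)=\frac{2}{d_x}-1$ and $\Delta f(y)=1-\frac{2}{d_y}$, hence $\Delta f(x)-\Delta f(y)=\frac{2}{d_x}+\frac{2}{d_y}-2$. The crux is verifying that this $f$ is genuinely $1$-Lipschitz on $N[x]\cup N[y]$, after which it extends to a global $1$-Lipschitz function by the extension remark from \cite{LLY}; this is precisely where the absence of short cycles enters. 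For $x'\in N(x)\setminus\{y\}$ and $y'\in N(y)\setminus\{x\}$, the hypothesis that $xy$ lies in no $C_3$ yields $d(x,y')=d(y,x')=2$, matching $|f(x)-f(y')|=|f(y)-f(x')|=2$. The delicate pair is $x'$ against $y'$, where $|f(x')-f(y')|=3$ demands $d(x',y')\geq3$; I expect this to be the main obstacle. It is resolved by observing that $d(x',y')=1$ would close a $C_4$ through $xy$ (namely $x\,x'\,y'\,y$) while $d(x',y')=2$ would close a $C_5$ through $xy$ (namely $x\,x'\,w\,y'\,y$ for a common neighbour $w$ of $x',y'$, with $C_3$-freeness guaranteeing $w\notin\{x,y\}$ and all five vertices distinct). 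Ruling both configurations out forces $d(x',y')\geq3$. Once every pairwise Lipschitz inequality is checked, the constructed $f$ attains the lower bound, so the infimum equals $\frac{2}{d_x}+\frac{2}{d_y}-2$, completing the proof.
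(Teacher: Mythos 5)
Your proposal is correct, and both halves check out: the lower bound is the universal estimate $\kappa_{\mathrm{LLY}}(x,y)\geq \frac{2}{d_x}+\frac{2}{d_y}-2$, valid for every edge with no girth assumption, and your test function together with the Lipschitz verification (triangle-freeness of $xy$ handles the pairs $\{x,y'\}$, $\{y,x'\}$ and the disjointness of $N(x)\setminus\{y\}$ from $N(y)\setminus\{x\}$; $C_4$- and $C_5$-freeness handle the critical pair $\{x',y'\}$) gives the matching upper bound. Note, however, that the paper does not prove this lemma at all — it is imported from Lin--Lu--Yau's girth-five paper \cite{girth5}, whose original argument predates the limit-free formulation and works with the $\alpha$-lazy random walks directly: one builds an explicit coupling between $m_x^\alpha$ and $m_y^\alpha$ to bound the transportation distance from above, a dual $1$-Lipschitz function to bound it from below, and then takes the limit $\alpha\to 1$. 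Your route through the M\"unch--Wojciechowski formulation (Theorem \ref{thm::2.1}) is genuinely different from that source: it eliminates both the coupling construction and the limit, at the cost of relying on the 2019 duality theorem. It also aligns exactly with the methodology of the present paper — the test functions in the proof of Theorem \ref{thm::1.4} and the subsequent claims are of precisely your type, extended locally via the remark from \cite{LLY} just as you do — with one addition: since the lemma asserts an equality rather than an inequality, you supply the matching lower bound, which is the piece the paper never needs for its own (upper-bound-only) computations.
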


\begin{lemma}[\cite{girth5}]\label{lemma::3.2}

    Suppose that an edge $xy$ in a graph $G$ is not in any $C_3$ or $C_4$. 
    Then $\kappa_{\mathrm{LLY}}(x,y)\leq\frac{1}{d_x}+\frac{2}{d_y}-1$.
\end{lemma}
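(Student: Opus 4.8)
The plan is to invoke the limit-free Münch--Wojciechowski formula (Theorem \ref{thm::2.1}). Since $xy\in E(G)$ we have $d(x,y)=1$, so the formula specializes to
$$\kappa_{\mathrm{LLY}}(x,y)=\inf_{\substack{f\in\Lip(1)\\ f(y)-f(x)=1}}\bigl(\Delta f(x)-\Delta f(y)\bigr).$$
Because the right-hand side is an infimum, any single admissible test function $f$ produces an upper bound on $\kappa_{\mathrm{LLY}}(x,y)$. So the whole problem reduces to a good choice of $f$ on $N[x]\cup N[y]$, which by the extension remark from \cite{LLY} can then be extended to a globally $1$-Lipschitz function without altering $\Delta f(x)$ or $\Delta f(y)$.

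First I would record the two structural consequences of the hypothesis: absence of a $C_3$ through $xy$ gives $N(x)\cap N(y)=\emptyset$, while absence of a $C_4$ through $xy$ means no edge joins a vertex of $N(x)\setminus\{y\}$ to a vertex of $N(y)\setminus\{x\}$. I would then define the test function by $f(x)=0$, $f(y)=1$, $f(a)=0$ for every $a\in N(x)\setminus\{y\}$, and $f(b)=2$ for every $b\in N(y)\setminus\{x\}$; the normalization $f(y)-f(x)=1$ is built in. A direct expansion of the Laplacian with these values yields
$$\Delta f(x)=\frac{1}{d_x}\sum_{z\in N(x)}\bigl(f(z)-f(x)\bigr)=\frac{1}{d_x},\qquad \Delta f(y)=\frac{1}{d_y}\sum_{z\in N(y)}\bigl(f(z)-f(y)\bigr)=1-\frac{2}{d_y},$$
so that $\Delta f(x)-\Delta f(y)=\frac{1}{d_x}+\frac{2}{d_y}-1$, which is exactly the asserted bound.

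The crux, and the only place the hypotheses are used, is verifying that $f$ is $1$-Lipschitz on $N[x]\cup N[y]$. The only pairs whose $f$-values differ by more than $1$ are $\{x,b\}$ and $\{a,b\}$ with $a\in N(x)\setminus\{y\}$ and $b\in N(y)\setminus\{x\}$, each of which needs distance at least $2$. For $\{x,b\}$ we have $b\notin N(x)$ since $N(x)\cap N(y)=\emptyset$, hence $d(x,b)=2$ and $|f(x)-f(b)|=2$ is admissible; for $\{a,b\}$ we have $ab\notin E(G)$ by $C_4$-freeness, hence $d(a,b)\geq 2$ and $|f(a)-f(b)|=2$ is again admissible. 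This is precisely why the values on the $N(x)$-side are pinned at $0$ rather than pushed to $-1$: a neighbour $a$ of $x$ may lie at distance only $2$ from a neighbour $b$ of $y$ (through a $C_5$, which the hypothesis does not forbid), forcing $f(a)\geq f(b)-2=0$.

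The main obstacle I anticipate is therefore not the computation but getting this asymmetry right: under the weaker $\{C_3,C_4\}$-free assumption one cannot symmetrically depress the $N(x)$-values, so the gain on the $x$-side drops from $\tfrac{2}{d_x}$ to $\tfrac{1}{d_x}$, which is exactly what separates the bound here from the equality $\frac{2}{d_x}+\frac{2}{d_y}-2$ of Lemma \ref{lemma::3.1} (where the additional $C_5$-freeness forces $d(a,b)=3$ and permits $f(a)=-1$). Once the Lipschitz check is in place, the extension result completes the argument and the upper bound follows.
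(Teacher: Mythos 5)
Your proposal is correct. Note first that the paper itself offers no proof of this lemma: it is quoted from \cite{girth5} with a citation, so there is no in-paper argument to match line by line. Your computation checks out: with $f\equiv 0$ on $N[x]\setminus\{y\}$, $f(y)=1$, $f\equiv 2$ on $N(y)\setminus\{x\}$, one gets $\Delta f(x)=\tfrac{1}{d_x}$, $\Delta f(y)=1-\tfrac{2}{d_y}$, hence $\nabla_{xy}\Delta f=\tfrac{1}{d_x}+\tfrac{2}{d_y}-1$, and your Lipschitz verification uses exactly the two hypotheses where they are needed ($C_3$-freeness for the pair $\{x,b\}$, $C_4$-freeness for $\{a,b\}$), with the McShane-type extension remark from \cite{LLY} handling globalization without disturbing $\Delta f(x)$ or $\Delta f(y)$. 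Methodologically, your route is the same one the present paper uses for every other upper bound it derives (compare the test function in Case 1 of the proof of Theorem \ref{thm::1.4}, which differs only in pushing the $N(x)$-side down to $-1$, legitimately there because of the extra structure): choose one admissible $f$ and apply Theorem \ref{thm::2.1}. The original source \cite{girth5} predates the M\"unch--Wojciechowski formula and instead bounds $W(m_x^\alpha,m_y^\alpha)$ from below via Kantorovich duality with an analogous Lipschitz function and then takes the $\alpha\to 1$ limit; your limit-free version buys a shorter verification with no limiting argument. Your closing observation --- that a possible $C_5$ through $xy$ puts some $a\in N(x)\setminus\{y\}$ at distance $2$ from some $b\in N(y)\setminus\{x\}$, forcing $f(a)\geq 0$ and accounting for the loss from $\tfrac{2}{d_x}$ (Lemma \ref{lemma::3.1}) to $\tfrac{1}{d_x}$ --- is precisely the right explanation of the asymmetry in the bound.
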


Next, we prove a lemma about the local structure of our graphs of interest.

 \begin{lemma}\label{lemma::3.3}
 
     Suppose that $G$ is a simple connected positively LLY-curved $C_4$-free graph. 
     For $xy\in E(G)$, if $d_x\geq 4$ and $d_y\geq 2$, then there  exists exactly one triangle containing the edge $xy$. 
     Moreover, if $\delta(G)\geq 2$, then $d_x$ must be an even integer for any $x$ satisfying $d_x\geq 4$.
 \end{lemma}
 
 \begin{proof}
 
      Let $xy\in E(G)$ such that $d_x\geq 4$ and $d_y\geq 2$. Suppose by contradiction that there is no triangle containing $xy$.
      It follows by \cref{lemma::3.2} that
     \begin{align*}
        \kappa_{\mathrm{LLY}}(x,y)&\leq \frac{2}{d_x}+\frac{1}{d_y}-1\\
        &\leq \frac{1}{2}+\frac{1}{2}-1
        =0,
    \end{align*}
    which contradicts the assumption. 

    Since each edge incident to $x$ is contained in a unique triangle (Otherwise, we would find a $C_4$), the degree of $x$ must be even when $d_x\geq 4$.
    Thus the lemma follows.
 \end{proof}
 
For convenience of the calculation in our proof, here we expand $\nabla_{xy} \Delta f$ with the definitions of the graph Laplacian and gradient function as follows.
 \begin{align*}
     \nabla_{xy} \Delta f
    & = \Delta f(x)-\Delta f(y)\\
    &=\frac{1}{d_x}\sum\limits_{u\in N(x)}(f(u)-f(x))-\frac{1}{d_y}\sum\limits_{v\in N(y)}(f(v)-f(y)).
\end{align*}
 
\begin{proof}[Proof of \cref{thm::1.4}]

Let $xy\in E(G)$ such that $d_x=\Delta(G)$. 
By \cref{lemma::3.3}, $\Delta(G)$ is an even integer when $\Delta(G)\geq 4$. 
Thus, $\Delta(G)\neq 7$.

Assume that $\Delta(G)\geq 8$. 
Owing to \cref{lemma::3.3}, there exists a triangle denoted by $xyw$ that contains $xy$, where $w$ is the only common neighbor of $x$ and $y$ (Otherwise we can find a $C_4$).

Consider a function $f:N(x)\cup N(y)\rightarrow\mathbb{R}$ given by
\begin{align*}
    f(z)= 
    \begin{cases}
        -1 & \text { if } z \in N(x)\setminus \{y,w\} ; \\
         0 & \text { if } z=x ; \\
         1 & \text { if } z \in N[y]\setminus\{x\}.
    \end{cases}
\end{align*}
  Let $u\in N(x)\setminus \{y,w\}$ and $v\in N[y]\setminus \{x,w\}$. 
  Since $G$ is $C_4$-free, $d(u,v)\geq 2$ for any $u$ and $v$. Therefore, $f\in \Lip(1)$. 
  $f(x)=0$ and $f(y)=1$ lead to $\nabla_{yx} f=\frac{f(y)-f(x)}{d(y, x)}=1$.
By the assumption and \cref{thm::2.1}, we have
\begin{align*}
    0< \kappa_{\mathrm{LLY}}(x,y)
    &\leq\frac{1}{d_x}\sum\limits_{u\in N(x)}(f(u)-f(x))-\frac{1}{d_y}\sum\limits_{v\in N(y)}(f(v)-f(y))\\
    & = \frac{1}{d_x}(-1\cdot(d_x-2)+1\cdot2-0\cdot d_x)-\frac{1}{d_y}(1\cdot(d_y-1)+0\cdot1-1\cdot d_y)   \\
    & = \frac{4}{d_x}+\frac{1}{d_y}-1 \\
    & \leq \frac{1}{2}+\frac{1}{2}-1=0,
\end{align*}
which is a contradiction. 
Hence, $\Delta(G) \leq 6$. Moreover, we find that the friendship graph $F_3$ is positively LLY-curved, which has maximum degree  6.

Therefore, the proof is complete.
\end{proof}

Actually, if we allow the existence of pendant edges, then $\Delta(G)=7$ can be realized. 
The following corollary tells us that the addition of one pendant edge can still make the graph's LLY curvature  positive.

\begin{corollary}

    Let $G$ be a simple connected positively LLY-curved $C_4$-free graph with only one pendant edge. 
    Then $\Delta(G)\leq 7$ and the upper bound is sharp.
\end{corollary}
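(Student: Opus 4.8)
The plan is to split the proof into the upper bound $\Delta(G)\le 7$ and its sharpness, reusing the machinery built for Theorem \ref{thm::1.4}. For the upper bound I would argue that introducing a single pendant edge destroys only the \emph{parity} obstruction of Case 2 of Theorem \ref{thm::1.4}, while Case 1 (which excludes $\Delta(G)\ge 8$) survives essentially verbatim. Recall that for a vertex $x$ with $d_x\ge 4$, every edge $xu$ with $d_u\ge 2$ lies in a unique triangle by Lemma \ref{lemma::3.3} together with $C_4$-freeness, so these triangles pair up the neighbours of $x$; when all neighbours have degree at least $2$ this forces $d_x$ to be even, which is exactly what ruled out $\Delta(G)=7$ in the $\delta(G)\ge 2$ setting. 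With one pendant edge present, the unique degree-$1$ neighbour is left unmatched, so an odd degree $7$ becomes admissible and the parity argument no longer applies.

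Concretely, to prove $\Delta(G)\le 7$ I would suppose $\Delta(G)\ge 8$ and pick $xy\in E(G)$ with $d_x=\Delta(G)$. Since $x$ has at most one neighbour of degree $1$ and $d_x\ge 8$, I may choose $y\in N(x)$ with $d_y\ge 2$, and Lemma \ref{lemma::3.3} then produces a triangle $xyw$ with $w$ the unique common neighbour. The same test function $f$ used in Case 1 of Theorem \ref{thm::1.4} is still $1$-Lipschitz on $N(x)\cup N(y)$ — the pendant vertex, having a single neighbour, cannot create any new adjacency that would violate the Lipschitz condition — so Theorem \ref{thm::2.1} yields $\kappa_{\mathrm{LLY}}(x,y)\le \frac{4}{d_x}+\frac{1}{d_y}-1\le \frac12+\frac12-1=0$, contradicting positivity. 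Hence $\Delta(G)\le 7$.

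For sharpness I would exhibit the graph $G$ obtained from $F_3$ by attaching one pendant vertex $p$ to its centre $c$, so that $d_c=7$ and $G$ has exactly one pendant edge. This $G$ is $C_4$-free — indeed its only cycles are the three triangles through $c$ — and it remains to check positivity on the three edge types. Using Theorem \ref{thm::2.1} (or equivalently the app of \cite{Liu}) one finds $\kappa_{\mathrm{LLY}}(c,p)=\frac{2}{7}$ for the pendant edge, where Lemma \ref{lemma::3.1} applies since $cp$ lies in no cycle; $\kappa_{\mathrm{LLY}}(c,a)=\frac{1}{14}$ for a centre–triangle edge $ca$; and $\kappa_{\mathrm{LLY}}(a,b)=\frac{3}{2}$ for a triangle edge $ab$ between the two degree-$2$ vertices. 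All three are positive, so $G$ realises $\Delta(G)=7$.

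I expect the sharpness verification to be the delicate point: raising the centre degree from $6$ to $7$ drives the centre–triangle curvature down to the small value $\frac{1}{14}$, so one must confirm it stays strictly positive, and when evaluating the infimum in Theorem \ref{thm::2.1} one must correctly identify the extremal $1$-Lipschitz function (assigning $-1$ to the far neighbours of $c$, including $p$, and $+1$ on the triangle $cab$). The upper-bound half, by contrast, is routine once one observes that the only structural effect of a single pendant edge is the loss of the parity constraint on $d_x$.
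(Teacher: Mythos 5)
Your proposal is correct and follows essentially the same route as the paper: the upper bound reuses Case 1 of Theorem \ref{thm::1.4} verbatim (with the correct observation that $d_x\geq 8$ lets you pick a neighbour $y$ of $x$ with $d_y\geq 2$, since only one pendant edge exists), and sharpness is witnessed by the same extremal graph, namely $F_3$ with a pendant edge attached at the centre (the paper's $F_3'$). The only difference is that you verify positivity by hand --- and your values $\kappa_{\mathrm{LLY}}(c,p)=\frac{2}{7}$ via Lemma \ref{lemma::3.1}, $\kappa_{\mathrm{LLY}}(c,a)=\frac{1}{14}$, and $\kappa_{\mathrm{LLY}}(a,b)=\frac{3}{2}$ all check out --- whereas the paper delegates this computation to the curvature calculator of \cite{Liu}.
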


\begin{proof}

    Let $xy\in E(G)$ such that $d_x=\Delta(G)$ and $d_y\geq 2$. 
    The same argument as  the proof of  \cref{thm::1.4} furnishes $\Delta(G)\leq 7$. 
    On the other hand, we can find a positively LLY-curved graph $F_3'$ shown in \cref{F_3'} whose maximum degree is $7$ and has one pendant edge.  Its LLY curvature can be checked by the graph curvature calculator.
\end{proof}

\begin{figure}[htbp]
    \centering
   \begin{tikzpicture}[scale=1.6,every node/.style={draw, circle, fill=black, inner sep=1mm}]

\node (O) at (0,0) {};

\foreach \angle in {300} {
\node (x) at (\angle+30:1){};
}

\foreach \angle in {0,120,240} {
    \node (A\angle) at (\angle:1) {};
    \node (B\angle) at (\angle+60:1) {};
}

\foreach \angle in {0,120,240} {
    \draw[line width=0.3mm] (O) -- (A\angle);
    \draw[line width=0.3mm] (O) -- (B\angle);
    \draw[line width=0.3mm] (A\angle) -- (B\angle);
    \draw[line width=0.3mm] (O) -- (x);
}

\end{tikzpicture}
     \caption{Graph $F_3'$ with positive LLY curvature and one pendant edge.}
   \label{F_3'}
\end{figure}

In view of \cref{thm::1.4}, we need more information to prove \cref{thm::1.5}. 
We provide two lemmas which might be more convenient than \cref{lemma::3.1} and \cref{lemma::3.2} when we apply them in our proof.

\begin{lemma}\label{lemma3.4}

   Suppose that $G$ is positively LLY-curved and $C_4$-free with $\delta(G)\geq 2$.  
   If $xy\in E(G)$ is  not in any $C_3$, then $xy$ is contained in a $C_5$.
\end{lemma}

 \begin{proof}
 
    Choose an edge $xy \in E(G)$ which is not in any $C_3$.
    If  $xy$ is not contained in any $C_5$, then by \cref{lemma::3.1} we have
    \begin{align*}
        0 < \kappa_{\mathrm{LLY}}(x,y)=\frac{2}{d_x}+\frac{2}{d_y}-2\leq 0,
    \end{align*}
    which is a contradiction.
\end{proof}

\begin{lemma}\label{lemma3.5}

    Suppose that $G$ is  positively LLY-curved and $C_4$-free  with $\delta(G)\geq 2$. 
    If $xy\in E(G)$ with $d_x=3$ is  not in any $C_3$, then $d_y=2$.
\end{lemma}

 \begin{proof}

    Choose an edge $xy \in E(G)$ such that $xy$ is not in any $C_3$ and $d_x=3$.
    Assume that $d_y\geq3$. 
    \cref{lemma::3.2} tells us 
    \begin{align*}
        0 < \kappa_{\mathrm{LLY}}(x,y)\leq \frac{1}{3}+\frac{2}{3}-1=0,
    \end{align*}
    which leads to a contradiction. 
    Hence, $d_y=2$.
\end{proof}

\begin{proof}[Proof of~\cref{thm::1.5}] 

Due to \cref{lemma::3.3}, $\Delta(G)$ is an even integer when $\Delta(G)\geq 4$.
Then we shall proceed with the proof by the following four cases.

    \begin{case}
    
        $\Delta(G)=2$. 
        
        In the this, $G$ can only be isomorphic to $C_3$ with constant LLY curvature $\frac{3}{2}$ or $C_5$ with constant LLY curvature $\frac{1}{2}$, as for $C_n$ ($n\geq 6)$, it has constant LLY curvature $0$ (\cite{LLY}, Example 2).

    \end{case}

    \begin{case}
    
        $\Delta(G)=3$.

        \begin{claim}\label{claim::3}
            $G$ contains a triangle.
        \end{claim}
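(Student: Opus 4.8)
The plan is to argue by contradiction: assume $G$ is triangle-free. Then $G$ has girth at least $5$, and by Claim~\ref{claim::1} every edge lies in a $C_5$. Since we are in the case $\Delta(G)=3$, fix a vertex $x$ with $d_x=3$; by Claim~\ref{claim::2} its three neighbours $a,b,c$ all have degree $2$. Writing $a',b',c'$ for the respective second neighbours (the unique neighbour of each of $a,b,c$ other than $x$), I would first record that $a',b',c'$ are distinct and lie outside $N[x]$: any coincidence, or any edge from one of them back into $N[x]$, would create a $C_3$ or $C_4$, both forbidden. Thus the closed neighbourhood of $x$ together with $a',b',c'$ forms a tree on seven vertices, and the only freedom left is how $a',b',c'$ attach to one another.

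Next I would feed the edges $xa$, $xb$, $xc$ into Claim~\ref{claim::1}. A $C_5$ through $xa$ must close up at $x$ through $b$ or $c$, and since $a,b,c$ have degree $2$ this forces the second neighbours to meet: the cycle is either $x\,a\,a'\,b'\,b$ or $x\,a\,a'\,c'\,c$, i.e. $a'\sim b'$ or $a'\sim c'$. Applying the same reasoning to $xb$ and $xc$ gives the three requirements $\{a'\sim b'\text{ or }a'\sim c'\}$, $\{a'\sim b'\text{ or }b'\sim c'\}$, $\{a'\sim c'\text{ or }b'\sim c'\}$. Because $a',b',c'$ cannot form a triangle, exactly two of the three possible edges are present and they share a common endpoint; up to relabelling $a,b,c$ this means $a'\sim b'$ and $a'\sim c'$, so $a'$ is a second degree-$3$ vertex (and by Claim~\ref{claim::2} its neighbours $b',c'$ have degree $2$). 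The resulting local picture is the theta graph $\Theta_{2,3,3}$: two degree-$3$ vertices $x$ and $a'$ joined by internally disjoint paths $x\,a\,a'$, $x\,b\,b'\,a'$, $x\,c\,c'\,a'$ of lengths $2,3,3$. I would then check that the pairwise distances among $x,a,b,c,b'$ are exactly those read off from this theta graph — in particular $d(x,b')=2$, $d(a,b')=2$ and $d(c,b')=3$ — which holds no matter how $G$ extends, since the prescribed degrees pin down the neighbourhoods of these vertices and forbid any $C_3$ or $C_4$ shortcut.

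Finally I would contradict positivity on the edge $xb$ (the hub $x$ to the first vertex of a length-$3$ path) via the limit-free formula of Theorem~\ref{thm::2.1}. Take the test function with $f(x)=0,\ f(a)=0,\ f(b)=1,\ f(c)=-1,\ f(b')=2$; the distances just recorded make $f$ one-Lipschitz on $N(x)\cup N(b)$, hence extendable to all of $G$, and $\nabla_{bx}f=1$. A direct computation gives $\Delta f(x)=\tfrac13(0+1-1)=0$ and $\Delta f(b)=\tfrac12\bigl((0-1)+(2-1)\bigr)=0$, so $\nabla_{xb}\Delta f=0$ and therefore $\kappa_{\mathrm{LLY}}(x,b)\le 0$, contradicting positive curvature. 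Hence the triangle-free assumption is untenable and $G$ contains a triangle.

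The main obstacle is the structural step: one must rule out every way the three $C_5$'s could sit around $x$ and show they are forced into the single configuration $\Theta_{2,3,3}$, and then confirm that no vertices of $G$ outside these seven can shorten the distances that make the test function admissible (it is precisely $d(a,b')=2$ and $d(c,b')=3$ that allow $f(b')=2$). Once the local structure and its metric are nailed down, the curvature estimate is a short, self-contained check, and the happy cancellation $\Delta f(x)=\Delta f(b)=0$ delivers the contradiction cleanly.
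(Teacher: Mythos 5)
Your proof is correct, and it reaches the same extremal configuration as the paper --- the two-pentagon graph $G_1$ of Figure \ref{c_5p}, which is your theta graph $\Theta_{2,3,3}$ --- but by a genuinely different route and with a different endgame. Structurally, the paper fixes a single edge $xy$ with $d_x=3$, $d_y=2$, takes one $C_5$ through it, and performs a case analysis on whether $x,y,v,w$ lie on a common cycle of length $5$ or at least $6$, eliminating the non-closing cases with auxiliary Lipschitz test functions; you instead apply Claim \ref{claim::1} to all three edges at the hub $x$ simultaneously, note that each forced $C_5$ must run through two degree-$2$ neighbours of $x$ (hence forces an edge between two of the second neighbours $a',b',c'$), and conclude by the combinatorial observation that exactly two of the three possible edges among $a',b',c'$ occur and share an endpoint. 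Since this saturates the degrees of all seven vertices, connectivity gives $G\cong\Theta_{2,3,3}$ outright, with no intermediate curvature estimates. More significantly, the paper then disposes of $G_1$ by appealing to the curvature calculator of \cite{Liu}, whereas you finish with an explicit test function ($f(x)=f(a)=0$, $f(b)=1$, $f(c)=-1$, $f(b')=2$): your distance checks $d(b,c)=2$, $d(a,b')=2$, $d(c,b')=3$ are correct, $\nabla_{bx}f=1$, and indeed $\Delta f(x)=\frac{1}{3}(0+1-1)=0$ and $\Delta f(b)=\frac{1}{2}(-1+1)=0$, so Theorem \ref{thm::2.1} gives $\kappa_{\mathrm{LLY}}(x,b)\le 0$, the desired contradiction. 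Your version buys self-containedness (no appeal to machine computation) at the cost of a somewhat longer structural argument; both are sound.
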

\begin{proof}[Proof of \cref{claim::3}]

    Suppose that $G$ contains no triangle. Consider an edge $xy\in E(G)$ such that $d_x=3$. Due to \cref{lemma3.4}, there exists a 5-cycle containing $xy$, denoted by $xyvtu$. 
    Let $w$ be another neighbor of $x$. 
      According to \cref{lemma3.5}, we have $d_y=d_u=d_w=2$.
 Moreover, at most one of $t$ or $v$ can have degree 3.
  If $d_t=d_v=2$, then no cycle contains the edge $xw$, which  contradicts \cref{lemma3.4}.
    Hence, exactly one of $t$ or $v$ has degree 3. 
    WLOG, let $d_v=3$.


Since $G$ is $C_4$-free, $v$ is not adjacent to $w$ and the vertices $x, u, t, v, w$ cannot form a $C_5$. 
In fact, since the edge $xw$ must lie in a 5-cycle by \cref{lemma3.4},
$v$ and $w$ must share a common neighbor, denoted by $p$. 
Owing to \cref{lemma3.5}, we have $d_p=2$.
Thus $G\cong G_1$, as shown in~\cref{c_5p}.
\begin{figure}[htbp]
   \centering
    \begin{tikzpicture}[scale=1.6,every node/.style={draw, circle, fill=black, inner sep=1mm}]
\node (O) at (0,0) [label=left:{$y$}]{};
\node (a) at (-0.6,0.9) [label=left:{$x$}]{};
\node (b) at (-1.2,0.1) [label=left:{$w$}]{};
\node (c) at (-1.1,-0.75) [label=left:{$p$}]{};
\node (d) at (0,-1) [label=below:{$v$}]{};
\node (e) at (1,-0.7) [label=right:{$t$}]{};
\node (f) at (1,0.2) [label=right:{$u$}]{};
    \draw[line width=0.3mm] (O) -- (a);
   \draw[line width=0.3mm] (O) -- (d);
    \draw[line width=0.3mm] (a) -- (b);
 \draw[line width=0.3mm] (a) -- (f);
\draw[line width=0.3mm] (b) -- (c);
    \draw[line width=0.3mm] (c) -- (d);
    \draw[line width=0.3mm] (d) -- (e);
     \draw[line width=0.3mm] (e) -- (f);
\end{tikzpicture}
     \caption{Graph $G_1$ containing some edges with 0 LLY curvature.}
    \label{c_5p}
\end{figure}
\end{proof}

Based on \cref{claim::3}, we obtain a triangle $xyu$ in $G$ such that $d_x=3$. 
Let $w$ be another neighbor of $x$. Since $G$ is $C_4$-free, the edge $xw$ does not lie in any $C_3$. 
    Due to \cref{lemma3.5}, we have $d_w=2$. 
    Let $v$ be the other neighbor of $w$. Note that $v$ is not adjacent to either $y$ or $u$; otherwise a $C_4$ would be formed. 
    
    Suppose that $d(u,v)=3$ (or $d(y,v)=3$). 
    Define a  function $f_1: N(x)\cup N(w)\rightarrow \mathbb{R}$ by
   \begin{align*}
        f_1(z)=
        \begin{cases}
            -1 & \text { if } z=u ; \\
             0 & \text { if } z \in \{x,y\}; \\
             1 & \text { if } z=w;\\
             2 & \text { if } z=v.
        \end{cases}
   \end{align*}
Since $G$ is $C_4$-free and $d_x=3$, we have $f_1\in \Lip(1)$ with $\nabla_{wx} f_1=1$. Now Theorem \ref{thm::2.1} yields 
\begin{align*}
     0 < \kappa_{\mathrm{LLY}}(x,w) & \leq \Delta f_1(x)-\Delta f_1(w)\\
   & \leq \frac{1}{3}(0-1+1-0) - \frac{1}{2}(2+0-2)
 =0.
\end{align*}
Thus, $d(y,v)=d(u,v)=2$, which implies that there exist two vertices $w_1$ and $w_2$ that $w_1$ is the unique common neighbor of $u$ and $v$, and $w_2$ is the unique common neighbor of $y$ and $v$. 
Note that $w_1\neq w_2$; otherwise, the vertices $x, u, w_1, y$ would form a 4-cycle.
The same argument applied to $d_w$ also yields $d_{w_1}=d_{w_2}=2$, which implies $G$ is isomorphic to the graph $T$ shown in~\cref{figure T}.

    \end{case}

    \begin{case}
    
        $\Delta(G)=4$. 
        
        By \cref{lemma::3.3}, $F_2$ is a subgraph of $G$. 
        We now divide this case into two subcases.

    \begin{subcase}
        The graph $F$ shown in \cref{3C3} is a subgraph of $G$.

\begin{figure}[htbp]
    \centering
   \begin{tikzpicture}[
    scale=1.6,
    vertex/.style={
        draw=black,
        fill=black,
        circle,
        inner sep=1mm,
        outer sep=0pt
    },
]

\node[vertex, label={[label]below :$x$}] (x) at (-0.5,0){};
\node[vertex, label={[label]below :$y$}] (y) at (0.5,0) {};
\node[vertex, label={[label]below:$u_1$}] (u_1) at (-1.5,0) {};
\node[vertex, label={[label]above :$u_2$}] (u_2) at (-1,0.9) {};
\node[vertex, label={[label]below:$v_1$}] (v_1) at (1.5,0) {};
\node[vertex, label={[label]above :$v_2$}] (v_2) at (1,0.9) {};
\node[vertex, label={[label]above:$w$}] (w) at (0,0.9) {};

\draw[line width=0.3mm] (x) -- (y);
\draw[line width=0.3mm] (x) -- (w);
\draw[line width=0.3mm] (y) -- (w);
\draw[line width=0.3mm] (x) -- (u_1);
\draw[line width=0.3mm] (x) -- (u_2);
\draw[line width=0.3mm] (y) -- (v_1);
\draw[line width=0.3mm] (y) -- (v_2);
\draw[line width=0.3mm] (v_1) -- (v_2);
\draw[line width=0.3mm] (u_1) -- (u_2);

\end{tikzpicture}
      \caption{Graph $F$ with three triangles.}
                \label{3C3}
\end{figure}   
  Assume that $F$ occurs as a subgraph of $G$ with vertices labeled as~\cref{3C3}. 
    Note that no $u_i$ is adjacent to any $v_j$, and none of $u_i$ or $v_j$ is adjacent to $w$; otherwise, we would find a $C_4$.

    When $d(u_i,v_j)=3$ for each $i,j$,  we consider a  function $f_2 : N(x) \cup N(y) \rightarrow \mathbb{R}$ given by
    \begin{align*}
        f_2(z)= 
        \begin{cases}
            -1 & \text { if } z\in \{u_1,u_2\} ; \\
             0 & \text { if } z\in\{x,w\}; \\
             1 & \text{ if } z=y;\\
             2 & \text{ if } z\in \{v_1,v_2\}.
        \end{cases}
    \end{align*}
Since $G$ is $C_4$-free, we have $f_2\in \Lip(1)$ with $\nabla_{yx} f_2=1$. 
By \cref{thm::2.1}, we have
\begin{align*}
    0 < \kappa_{\mathrm{LLY}} (x,y) & \leq \Delta f_2(x)-\Delta f_2(y)\\
    & \leq \frac{1}{4} (-1-1+1+0-0) - \frac{1}{4} (0+0+2+2-4)
    =- \frac{1}{4},
\end{align*}
which leads to a contradiction.
Therefore, $d(u_i,v_j)=2$ holds for some $i$ and $j$.
WLOG, assume $d(u_1,v_1)=2$, and let $p$ be the unique common neighbor of $u_1$ and $v_1$.
We consider the following two cases. 

The  first case is $d_{u_1}=d_{v_1}=3$. 
Define a function $f_3:N(u_1) \cup N(p) \rightarrow \mathbb{R}$ by
\begin{align*}
    f_3(z)= 
    \begin{cases}
        -1 & \text{ if } z=u_2 ; \\
         0 & \text{ if } z \in \{u_1,x\}; \\
         1 & \text{ if } z\in N[p] \setminus \{v_1,u_1\}; \\
         2 & \text{ if } z=v_1.
    \end{cases}
\end{align*}
Since $G$ is $C_4$-free and $d_{u_1}=d_{v_1}=3$, we have $f_3\in \Lip(1)$ and $\nabla_{pu_1} f_3=1$. 
Thus, Theorem \ref{thm::2.1} yields 
\begin{align*}
      0 < \kappa_{\mathrm{LLY}}(u_1,p) & \leq \Delta f_3(u_1)-\Delta f_3(p)\\
    &\leq \frac{1}{3}(-1+1+0-0)-\frac{1}{d_p}(d_p-2+2+0-d_p)
    =0.
\end{align*}

The second case is $d_{u_1}=4$ (or $d_{v_1}=4$).
It follows from \cref{lemma::3.3} that $p_0,p,u_1$ must form a triangle, where $p_0$ denotes another neighbor of $u_1$.
Note also that $d_p=d_{v_1}$.
If $d_p=d_{v_1}=3$, then the edge $p v_1$ lies in no $C_3$ or $C_4$, contradicting \cref{lemma3.5}.
For $d_p=d_{v_1}=4$, since $G$ is $C_4$-free, $p_0$ is not adjacent to $v_1$ and $v_2$.
Denote another neighbor of $v_1$ by $p_1$, which is also a neighbor of $p$ due to~\cref{lemma::3.3}.
We consider two situations as follows.
\begin{itemize}
    \item  $d(p_0,v_2) \geq 3$.
    
    We define a  function $f_4:N(p)\cup N(v_1)\rightarrow \mathbb{R}$  by
\begin{align*}
    f_4(z)= 
    \begin{cases}
        -1 & \text { if } z\in \{u_1,p_0\} ; \\
         0 & \text { if } z=p; \\
         1 & \text { if } z\in \{p_1,y\}; \\
         2&\text{ if } z=v_2.
    \end{cases}
\end{align*}
Since $G$ is $C_4$-free and $d_{u_1}=\Delta(G)$, it satisfies that $f_4\in \Lip(1)$ with $\nabla_{v_1p} f_4=1$. 
Applying~\cref{thm::2.1}, we have
\begin{align*}
     0<\kappa_{\mathrm{LLY}}(p,v_1) & \leq \Delta f_4(p)-\Delta f_4(v_1)\\
     & \leq \frac{1}{4}(-1-1+1+1-0) - \frac{1}{4} (1+1+0+2-4) = 0.
\end{align*}
Hence $d(u_i,v_j)\neq 2$ for any $i,j$, which contradicts $d(u_i,v_j)=2$ for some $i$ and $j$.

\item $d(p_0,v_2)=2$.

Now we have five vertices $x,y,u_1,v_1,p$ with maximum degree.
As $G$ is $C_4$-free, $p_0$ is not adjacent to $u_2,w,v_2,p_1$.
Thus, we can find a new vertex $a$ such that $a\sim p_0$ and $a\sim v_2$.
The condition $C_4$-free also leads that $a$ is not adjacent to $u_2,w,p_1$.
Next we claim that $d(a,w)=2$.
If $d(a,w)\geq 3$, then we can construct a 1-Lipschitz function $f_5: N(y)\cup N(v_2)\rightarrow \mathbb{R}$ such that $\nabla_{v_2y}f_5=1$ as follows.
\begin{align*}
    f_5(z)= 
    \begin{cases}
        -1 & \text { if } z\in \{x,w\} ; \\
         0 & \text { if } z=y; \\
         1 & \text { if } z\in N[v_2]\setminus\{a,y\}; \\
         2&\text{ if } z=a.
    \end{cases}
\end{align*}
Owing to~\cref{thm::2.1}, we have
\begin{align*}
     0<\kappa_{\mathrm{LLY}}(y,v_2) & \leq \Delta f_5(y)-\Delta f_5(v_2)\\
     & \leq \frac{1}{4}(-1-1+1+1-0) - \frac{1}{d_{v_2}} ((d_{v_2}-2)+2-d_{v_2}) = 0.
\end{align*}
Therefore, $d(a,w)=2$.
The condition $C_4$-free implies that the unique common neighbor of $a$ and $w$ is a new vertex, denoted by $b$.
If $d_{p_0}=3$, then $p_0$ only has three neighbors $a,p,u_1$, which indicates that the edge $ap_0$ is not in any $C_3$.
It is impossible according to~\cref{lemma3.5}.
It follows that $d_{p_0}=4$.
If $b\not\sim p_0$, then we can find a new vertex $c$ such that $a,c,p_0$ form a triangle by~\cref{lemma::3.3}, which leads that $b\sim v_2$, contradicting $C_4$-free.
So $b\sim p_0$, and $a,b,p_0$ form a $C_3$. 
Combining with the fact above, \cref{lemma::3.3} also tells us $d_a=d_{v_2}$.
If $d_a=d_{v_2}=3$, then $av_2$ is not in any $C_3$, which contradicts~\cref{lemma3.5}.
Therefore, $d_a=d_{v_2}=4$, and they have a unique common neighbor, denoted by $a_0$.
Now we have eight vertices $x,y,u_1,v_1,p,p_0,a,v_2$ with maximum degree.
Notice that $a_0$ cannot be adjacent to $b,w,p_1$ due to $C_4$-free.
Apply the similar argument as above, and we obtain that $d_b=d_w=4$.
The unique common neighbor of $b$ and $w$ is denoted by $b_0$.
We claim that $b_0\sim p_1$.
If $b_0\not\sim p_1$, then we can construct a function similar to $f_5$ to find a contradiction. 
Define a  function $f_6:N(p_0)\cup N(p)\rightarrow \mathbb{R}$  by
\begin{align*}
    f_6(z)= 
    \begin{cases}
        -1 & \text { if } z\in \{a,b\} ; \\
         0 & \text { if } z=p_0; \\
         1 & \text { if } z\in \{p,v_1,u\}; \\
         2&\text{ if } z=p_1.
    \end{cases}
\end{align*}
Since $p_1$ is not adjacent to $v_2,a_0,b,p_0,a,w,b_0$, we have $d(p_1,b)=d(p_1,a)=3$.
It is clear that $f_6\in\Lip(1)$ and $\nabla_{pp_0}f_6=1$. 
By~\cref{thm::2.1}, $\kappa_{\mathrm{LLY}}(p_0,p)\leq 0$, a contradiction.
As the argument before, $d_{b_0}=d_{p_1}=4$.
The unique common neighbor of $b_0$ and $p_1$ is denoted by $p'$.
If $u_2\not\sim a_0$, then we  consider a function $f_7: N(a)\cup N(p_0) \rightarrow \mathbb{R}$ as
\begin{align*}
    f_7(z)= 
    \begin{cases}
        -1 & \text { if } z\in \{a_0,v_2\} ; \\
         0 & \text { if } z=a; \\
         1 & \text { if } z\in \{p_0,p,b\}; \\
         2&\text{ if } z=u_1.
    \end{cases}
\end{align*}
Since $a_0$ and $v_2$ are not adjacent to the neighbors of $u_1$, $d(u_1,a_0)=d(u_1,v_2)=3$.
Thus, $f_7\in\Lip(1)$ and $\nabla_{p_0a}f_7=1$.
By~\cref{thm::2.1}, $\kappa_{\mathrm{LLY}}(a,p_0)\leq 0$, a contradiction.
So $a_0\sim u_2$.
As the argument before, $d_{u_2}=d_{a_0}=4$.
Last, we prove that $a_0$ and $u_2$ must be adjacent to $p'$.
If $p'$ is not the common neighbor of $a_0$ and $u_2$, then there exists a new vertex $e$ such that $a_0,u_2,e$ form a $C_3$ by~\cref{lemma::3.3}.
Define a function $f_8: N(u_1)\cup N(p) \rightarrow \mathbb{R}$ by
\begin{align*}
    f_8(z)= 
    \begin{cases}
        -1 & \text { if } z\in \{x,u_2\} ; \\
         0 & \text { if } z=u_1; \\
         1 & \text { if } z\in \{p_0,p,v_1\}; \\
         2&\text{ if } z=p_1.
    \end{cases}
\end{align*}
Clearly, $p_1$ is not adjacent to the neighbors of $u_2$ and $x$.
So $d(p_1,u_2)=d(p_1,x)=3$.
It follows that $f_8\in\Lip(1)$ and $\nabla_{pu_1}f_8=1$.
By~\cref{thm::2.1}, $\kappa_{\mathrm{LLY}}(u_1,p)\leq 0$, a contradiction.
Therefore, such a vertex $e$ cannot exist, and $u_2,a_0$ must be adjacent to $p'$.
Now we get a 4-regular graph with 15 vertices.
Actually, this graph is the line graph of Peterson graph shown as~\cref{line graph of peterson graph}.
For clarity, we label all the vertices that we have used above in~\cref{labeled H}.
\begin{figure}[htbp]
    \centering
   \begin{tikzpicture}[x=2.0mm, y=2.0mm,
    inner xsep=0pt, inner ysep=0pt,
    outer xsep=0pt, outer ysep=0pt, scale=0.4]

    \definecolor{L}{rgb}{0,0,0}
    \definecolor{F}{rgb}{0,0,0}

\coordinate (p1)  at ( 40, 48) ;
\coordinate (p2)  at (-40, 48);
\coordinate (p3)  at (  0, 32);

\coordinate (p4)  at (  0, 20);
\coordinate (p7)  at ( 18,  6);
\coordinate (p8)  at (-18,  6);
\coordinate (p9)  at (-10,-14);
\coordinate (p10) at ( 10,-14);

\coordinate (p5)  at (-27, 10);
\coordinate (p6)  at ( 27, 10);

\coordinate (p11) at (-52,-18);
\coordinate (p12) at ( 52,-18);
\coordinate (p13) at (-18,-26);
\coordinate (p14) at ( 18,-26);
\coordinate (p15) at (  0,-48);

\draw[line width=0.30mm, draw=L] (p1)--(p2);
\draw[line width=0.30mm, draw=L] (p1)--(p3);
\draw[line width=0.30mm, draw=L] (p1)--(p6);
\draw[line width=0.30mm, draw=L] (p1)--(p12);

\draw[line width=0.30mm, draw=L] (p2)--(p3);
\draw[line width=0.30mm, draw=L] (p2)--(p5);
\draw[line width=0.30mm, draw=L] (p2)--(p11);

\draw[line width=0.30mm, draw=L] (p3)--(p7);
\draw[line width=0.30mm, draw=L] (p3)--(p8);

\draw[line width=0.30mm, draw=L] (p4)--(p5);
\draw[line width=0.30mm, draw=L] (p4)--(p6);
\draw[line width=0.30mm, draw=L] (p4)--(p9);
\draw[line width=0.30mm, draw=L] (p4)--(p10);

\draw[line width=0.30mm, draw=L] (p5)--(p11);

\draw[line width=0.30mm, draw=L] (p6)--(p12);

\draw[line width=0.30mm, draw=L] (p7)--(p8);
\draw[line width=0.30mm, draw=L] (p7)--(p14);

\draw[line width=0.30mm, draw=L] (p8)--(p10);
\draw[line width=0.30mm, draw=L] (p8)--(p13);

\draw[line width=0.30mm, draw=L] (p9)--(p5);
\draw[line width=0.30mm, draw=L] (p9)--(p7);
\draw[line width=0.30mm, draw=L] (p9)--(p14);

\draw[line width=0.30mm, draw=L] (p10)--(p6);
\draw[line width=0.30mm, draw=L] (p10)--(p13);

\draw[line width=0.30mm, draw=L] (p11)--(p13);
\draw[line width=0.30mm, draw=L] (p11)--(p15);

\draw[line width=0.30mm, draw=L] (p12)--(p14);
\draw[line width=0.30mm, draw=L] (p12)--(p15);

\draw[line width=0.30mm, draw=L] (p13)--(p15);

\draw[line width=0.30mm, draw=L] (p14)--(p15);

\path[line width=0.30mm, draw=L, fill=F] (p1)  circle (2 mm);
\path[line width=0.30mm, draw=L, fill=F] (p2)  circle (2 mm);
\path[line width=0.30mm, draw=L, fill=F] (p3)  circle (2 mm);

\path[line width=0.30mm, draw=L, fill=F] (p4)  circle (2 mm);
\path[line width=0.30mm, draw=L, fill=F] (p5)  circle (2 mm);
\path[line width=0.30mm, draw=L, fill=F] (p6)  circle (2 mm);

\path[line width=0.30mm, draw=L, fill=F] (p7)  circle (2 mm);
\path[line width=0.30mm, draw=L, fill=F] (p8)  circle (2 mm);
\path[line width=0.30mm, draw=L, fill=F] (p9)  circle (2 mm);
\path[line width=0.30mm, draw=L, fill=F] (p10) circle (2 mm);

\path[line width=0.30mm, draw=L, fill=F] (p11) circle (2 mm);
\path[line width=0.30mm, draw=L, fill=F] (p12) circle (2 mm);
\path[line width=0.30mm, draw=L, fill=F] (p13) circle (2 mm);
\path[line width=0.30mm, draw=L, fill=F] (p14) circle (2 mm);
\path[line width=0.30mm, draw=L, fill=F] (p15) circle (2 mm);

\node[font=\large, draw=none, fill=none, inner sep=2mm] at (p1) [above right] {$p_1$};
\node[font=\large, draw=none, fill=none, inner sep=2mm] at (p2) [above left]  {$p'$};
\node[font=\large, draw=none, fill=none,inner sep=2mm] at (p3) [above]       {$b_0$};
\node[font=\large, draw=none, fill=none,inner sep=2mm] at (p4) [above]       {$v_2$};
\node[font=\large, draw=none, fill=none, inner sep=2mm] at (p5) [left]        {$a_0$};
\node[font=\large, draw=none, fill=none, inner sep=2mm] at (p6) [right]       {$v_1$};
\node[font=\large, draw=none, fill=none, inner sep=2mm] at (p7) [right]       {$b$};
\node[font=\large, draw=none, fill=none, inner sep=2mm] at (p8) [left]        {$w$};
\node[font=\large, draw=none, fill=none, inner sep=2mm] at (p9) [left]        {$a$};
\node[font=\large, draw=none, fill=none, inner sep=2mm] at (p10)[right]       {$y$};
\node[font=\large, draw=none, fill=none, inner sep=2mm] at (p11)[left]        {$u_2$};
\node[font=\large, draw=none, fill=none, inner sep=2mm] at (p12)[right]       {$p$};
\node[font=\large, draw=none, fill=none, inner sep=2mm] at (p13)[below]        {$x$};
\node[font=\large, draw=none, fill=none, inner sep=2mm] at (p14)[below]       {$p_0$};
\node[font=\large, draw=none, fill=none, inner sep=2mm] at (p15)[below]       {$u_1$};
\end{tikzpicture}
    \caption{Labeled graph $H$}
    \label{labeled H}
\end{figure}
\end{itemize}    
    \end{subcase}

           \begin{subcase}
        The graph $F$ is not a subgraph of $G$. 

        Choose a vertex $x$ with $d_x=4$. 
    If $G\ncong F_2$, then there exists a neighbor $y$ of $x$ such that $d_y=3$ (Note that if $d_y=4$, then the subgraph $F$ would appear). 
   Let $x,u_1,u_2$ form the other triangle. 
   Denote by $w$ the common neighbor of $x$ and $y$, and by $v$ another neighbor of $y$. 
   Owing to \cref{lemma3.5}, we have $d_v=2$, and let $v_0$ be the other neighbor of $v$. 
    The local structure of $G$ is shown as \cref{2C3}. 

\begin{figure}[htbp]
    \centering
    \begin{tikzpicture}[
    scale=1.6,
    vertex/.style={
        draw=black,
        fill=black,
        circle,
        inner sep=1mm,
        outer sep=0pt
    },
]

\node[vertex, label={[label]below :$x$}] (x) at (-0.5,0){};
\node[vertex, label={[label]below :$y$}] (y) at (0.5,0) {};
\node[vertex, label={[label]below:$u_1$}] (u_1) at (-1.5,0) {};
\node[vertex, label={[label]above :$u_2$}] (u_2) at (-1,0.9) {};
\node[vertex, label={[label]below:$v$}] (v) at (1.5,0) {};
\node[vertex, label={[label]above :$v_0$}] (v_0) at (1,0.9) {};
\node[vertex, label={[label]above:$w$}] (w) at (0,0.9) {};

\draw[line width=0.3mm] (x) -- (y);
\draw[line width=0.3mm] (x) -- (w);
\draw[line width=0.3mm] (y) -- (w);
\draw[line width=0.3mm] (x) -- (u_1);
\draw[line width=0.3mm] (x) -- (u_2);
\draw[line width=0.3mm] (y) -- (v);
\draw[line width=0.3mm] (v) -- (v_0);
\draw[line width=0.3mm] (u_1) -- (u_2);

\end{tikzpicture}
     \caption{Local structure of $G$ with subgraph $F_2$.}
    \label{2C3}
\end{figure}   

By~\cref{lemma3.4}, $yv$ must lie in some $C_5$.
We divide it into two cases.
\begin{itemize}
    \item  $yv,vv_0,wy$ are in a common $C_5$.
    
Since $G$ is $C_4$-free, we can find a new vertex $w_0$ such that $y,v,v_0,w_0,w$ form a $C_5$.
By~\cref{lemma3.5}, $d_{w_0}=2$.
If $v_0\sim u_1$ (or $v_0\sim u_2$), then $d_{u_1}=d_{v_0}=3$ and $u_2v_0$ is not in any $C_3$, which are in contradiction with~\cref{lemma3.5}.
If $v_0$ is not adjacent to $u_1$ and $u_2$, then $yv$ would have non-positive LLY curvature.
Define a  function $f_9:N(y)\cup N(v)\rightarrow \mathbb{R}$ by
\begin{align*}
    f_9(z)= 
    \begin{cases}
        -1 & \text { if } z=x ; \\
         0 & \text { if } z \in \{y,w\}; \\
         1 & \text { if } z=v; \\
         2 & \text { if } z=v_0.
  \end{cases}
\end{align*}
Clearly, $f_9\in \Lip(1)$ and $\nabla_{vy} f_9=1$ because $G$ is $C_4$-free. 
By~\cref{thm::2.1}, we have $\kappa_{\mathrm{LLY}}(y,v)\leq 0$.

    \item $yv,vv_0,xy$ are in a common $C_5$.

    In this case, $v_0$ must be adjacent to exactly one of $u_1$ and $u_2$.
    WLOG, assume that $v_0\sim u_2$.
     By~\cref{lemma3.5}, we have $d_{v_0}=2$ and $d_{u_2}=3$.
     Consider a  function $f_{10}:N(y)\cup N(v)\rightarrow \mathbb{R}$ given by
\begin{align*}
    f_{10}(z)= 
    \begin{cases}
        -1 & \text { if } z=w ; \\
         0 & \text { if } z \in \{y,x\}; \\
         1 & \text { if } z=v; \\
         2 & \text { if } z=v_0.
  \end{cases}
\end{align*}
Again, $f_{10}\in \Lip(1)$ and $\nabla_{vy} f_{10}=1$. 
By~\cref{thm::2.1}, we have $\kappa_{\mathrm{LLY}}(y,v)\leq 0$, which leads to a contradiction. 
\end{itemize}

Therefore, $G\cong F_2$.
    \end{subcase}

    \end{case}

    \begin{case}
    
        $\Delta(G)=6$.
        
        According to \cref{lemma::3.3}, $G$ must contain $F_3$ as a subgraph. 
        We now show that, in this case, $G$ must be isomorphic to $F_3$.
       
        Choose a vertex $x$ with degree $6$. If $G\ncong F_3$, then there exists a neighbor $y$ of $x$ such that $d_y\geq3$. Let $w$ be the unique common neighbor of $x$ and $y$. The remaining two triangles are formed by $x u_1 u_2$ and $x u_3 u_4$. 
Consider a  function $f_{11}:N(x)\cup N(y)\rightarrow \mathbb{R}$ given by
    \begin{align*}
        f_{11}(z)= 
        \begin{cases}
           -1 & \text { if } z=u_i ; \\
            0 & \text { if } z=x; \\
            1 & \text { if } z \in N[y] \setminus \{x\} .
           \end{cases}
       \end{align*}
It is clear that $f_{11}\in \Lip(1)$ since  $G$ is $C_4$-free, and $\nabla_{yx} f_{11}=1$.
By \cref{thm::2.1}, we have
\begin{align*}
     0< \kappa_{\mathrm{LLY}}(x,y)
    & \leq \Delta f_{11}(x)-\Delta f_{11}(y)\\
    &\leq \frac{1}{6}(4 \cdot (-1)+1+1-0) - \frac{1}{d_y}(d_y-1-d_y)\\
    &=-\frac{1}{3} + \frac{1}{d_y} \leq 0,
\end{align*}
which leads to a contradiction. Thus, $G\cong F_3$.
    \end{case}
    It can be verified, either by using the graph curvature calculator or by direct calculation, that the graphs $T,H,F_2,F_3$ are positively LLY-curved. This completes the proof.
\end{proof}

The following corollary is a direct consequence of \cref{thm::1.5}. 

\begin{corollary}
    Let $G$ be a simple connected positively LLY-curved $C_4$-free outerplanar graph with $\delta(G)\geq 2$. 
    Then $G\cong C_3$, $G\cong C_5$ or $G\cong F_k$ $(k=2,3)$.
\end{corollary}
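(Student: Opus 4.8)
The plan is to obtain this corollary as an immediate consequence of Theorem \ref{thm::1.5}, supplemented by a single structural observation. The hypotheses of the corollary are exactly those of Theorem \ref{thm::1.5} together with the extra requirement that $G$ be outerplanar; hence Theorem \ref{thm::1.5} already forces $G$ to be isomorphic to one of the five graphs $C_3$, $C_5$, $T$, $F_2$, $F_3$. It therefore remains only to decide which of these five are outerplanar, and to show that precisely $T$ is excluded while the other four survive into the stated conclusion.

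First I would dispose of the four graphs that remain. The cycles $C_3$ and $C_5$ are trivially outerplanar, since each bounds a single face and all vertices lie on the outer boundary. For the friendship graphs $F_2$ and $F_3$ I would give an explicit outerplanar drawing: place the common centre and the two non-central vertices of each triangle on a circle, ordered so that the two non-central vertices of each triangle are consecutive. Then every edge joining the two non-central vertices of a triangle lies on the outer cycle, while all remaining edges emanate from the shared centre and so form a non-crossing fan. (This argument in fact shows $F_k$ is outerplanar for every $k$, but only $k=2,3$ is needed, and the paper already records that $F_3$ is outerplanar.)

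The crux is to show that $T$ is \emph{not} outerplanar. Here I would invoke the classical characterisation that a graph is outerplanar if and only if it contains no subdivision of $K_4$ and no subdivision of $K_{2,3}$. The key observation is that $T$ is \emph{itself} a subdivision of $K_4$: taking as branch vertices the three vertices of the triangle together with the centre of the $K_{1,3}$, the three sides of the triangle realise three of the six edges of $K_4$ directly, while each leaf of the $K_{1,3}$, together with its star edge and its matching edge, realises one of the remaining three edges as an internally disjoint path of length two. Thus $T$ is homeomorphic to $K_4$, and so it cannot be outerplanar; consequently $G\not\cong T$ under the strengthened hypothesis, leaving exactly $C_3$, $C_5$, $F_2$, $F_3$.

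I do not expect a genuine obstacle here, since the statement is essentially a filtering of the Theorem \ref{thm::1.5} classification by a single topological condition. The only point requiring care is correctly exhibiting the four branch vertices of $T$ and the six internally disjoint paths witnessing the $K_4$-subdivision; once this is in place the non-outerplanarity of $T$, and hence the whole corollary, follows at once.
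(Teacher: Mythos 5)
Your proposal is correct and follows exactly the route the paper intends: the paper states this corollary as a direct consequence of Theorem \ref{thm::1.5}, i.e.\ one filters the list $C_3$, $C_5$, $T$, $F_2$, $F_3$ by outerplanarity, which is precisely your argument. Your write-up merely supplies the detail the paper leaves implicit---that $T$ is excluded because it is a subdivision of $K_4$ (with the triangle vertices and the centre of the $K_{1,3}$ as branch vertices) and hence not outerplanar, while $C_3$, $C_5$, $F_2$, $F_3$ are outerplanar---and this verification is accurate.
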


\section{An example}\label{section4}

In our proof, we only calculate the upper bounds of LLY curvature, and utilize the graph curvature calculator to verify whether a graph is positively LLY-curved or not for convenience. 
As an example, here we verify that $F_2$ has positive LLY curvature directly by establishing lower bounds via \eqref{eq:bourne}.

Denote the vertex of degree 4 in $F_2$ by $x$ and two triangles by $x x_1 x_2$ and $x x_3 x_4$. By symmetry, we only need to consider $\kappa_{\mathrm{LLY}}(x,x_1)$ and $\kappa_{\mathrm{LLY}}(x_1,x_2)$. 

For $\frac{1}{3}\leq \alpha\leq 1$, consider a coupling $A_1:V\times V\rightarrow [0,1]$ defined as
\begin{align*}
    A_1(u,v)= 
    \begin{cases}
         \min\{m_x^\alpha(u),m_{x_1}^\alpha(u) \}, & \text { if } u=v ; \\
         \alpha -\frac{1-\alpha}{2}, & \text { if } u=x,v=x_1 ; \\
         \frac{1-\alpha}{2}-\frac{1-\alpha}{4}, & \text { if } u=x_3,v= x_1 ; \\
         \frac{1-\alpha}{2}-\frac{1-\alpha}{4}, & \text { if } u=x_4,v= x_2 ; \\
          0, & \text { otherwise.}
    \end{cases}
\end{align*}
By \cref{eq:bourne}, we derive
\begin{align*}
    \kappa_{\mathrm{LLY}}(x, x_1)=\frac{\kappa_\alpha (x,x_1)}{1-\alpha} \geq \frac{1 - \sum \limits_{v \in V} \sum \limits_{u\in V} A_1(u,v)d(u,v)}{1-\alpha}=\frac{1}{2}.
\end{align*}
Similarly, for $\frac{1}{3}\leq \alpha\leq 1$, consider another coupling $A_2:V\times V\rightarrow [0,1]$ defined as 
\begin{align*}
    A_2(u,v)= 
    \begin{cases}
        \min\{m_{x_1}^\alpha(u),m_{x_2}^\alpha(u) \}, & \text { if } u=v ; \\
        \alpha -\frac{1-\alpha}{2}, & \text { if } u=x_1,v=x_2 ; \\
        0, & \text { otherwise.}
    \end{cases}
\end{align*}
By \cref{eq:bourne}, we deduce
\begin{align*}
    \kappa_{\mathrm{LLY}}(x_1,x_2)=\frac{\kappa_\alpha(x_1,x_2)}{1-\alpha}\geq\frac{1-\sum\limits_{v\in V}\sum\limits_{u\in V} A_2(u,v)d(u,v)}{1-\alpha}=\frac{3}{2}.
\end{align*}

Therefore, $F_2$ is positively LLY-curved.

\section{Concluding remarks}
In fact, the condition $\delta(G)\geq 2$ can be removed. 
\cref{lemma::3.1} implies that any pendant edge has positive LLY curvature, and removing such edges increases LLY curvature on each of their adjacent edges. 
Therefore, for a graph containing pendant edges, we may remove them iteratively, eventually obtaining a graph with no pendant edges. 
To classify all simple connected positively LLY-curved $C_4$-free graphs, it suffices to begin with the graphs listed in \cref{thm::1.5} and attach pendant edges layer by layer, verifying the curvature at each step using the graph curvature calculator (Note that star graphs are all positively-curved, which may be viewed as constructed by attaching pendant edges to a single vertex).

The condition $C_4$-free cannot be weakened to induced $C_4$-free, as many positively LLY-curved graphs contain no induced $C_4$. 
For instance, all the complete graphs have no induced $C_4$ and yet possess large positive LLY curvature.
Other examples, such as $K_2$ joined with a independent set, have been shown in~\cite[Figure 2]{planar LLY}.

At last, combining our main result with the classification of graphs with girth at least 5 given by Lin, Lu and Yau~\cite{girth5}, we aim to pursue a stronger classification result through the following problem.

\begin{problem}
     Classify all  connected $C_4$-free graphs with non-negative LLY curvature.
\end{problem}


\end{document}